\address[karl.auinger@univie.ac.at]{K.~Auinger, Fakult\"at f\"ur Mathematik, Universit\"at Wien, Nordbergstrasse 15, A-1090 Wien, Austria}
\newtheorem{theorem}{Theorem}[section]
\newtheorem{corollary}[theorem]{Corollary}
\newtheorem{lemma}[theorem]{Lemma}
\newtheorem{proposition}[theorem]{Proposition}
\theoremstyle{definition}
\DeclareMathOperator{\rk}{rk}
\DeclareMathOperator{\dom}{dom} \DeclareMathOperator{\ran}{ran}
\DeclareMathOperator{\Sing}{Sing}
\def\softl{l\kern-0.3ex\raise0.1ex\hbox{'}\kern-0.3ex}   
\DeclareSymbolFont{rsfscript}{OMS}{rsfs}{m}{n}
\DeclareSymbolFontAlphabet{\mathrsfs}{rsfscript}
\def\rrbb{{-\!\!\!-\!\!\!-\!\!\!-\!\!\!-}}
\def\ep{\varepsilon}
\def\Dc{\mathrsfs{D}}
\def\Hc{\mathrsfs{H}}
\def\Sim{\mathfrak{S}}
\def\si{\sigma}
\def\wt{\widetilde}
\def\circledm{\protect\mathbin{\hbox
    {\protect$\bigcirc$\rlap{\kern-8.1pt\raise0.7pt\hbox
    {\protect$\mathtt{m}$}}}}}        
\def\malc{\circledm}            
\title[Krohn--Rhodes complexity]{Krohn--Rhodes complexity\\ of Brauer type semigroups}
\author{K.~Auinger}
\address{Fakult\"at f\"ur Mathematik, Universit\"at Wien,
Nordbergstrasse 15, A-1090 Wien, Austria}
\email{karl.auinger@univie.ac.at}
\begin{document}

\begin{abstract}
The Krohn--Rhodes complexity of the Brauer type semigroups $\mathfrak{B}_n$ and $\mathfrak{A}_n$ is computed. In three-quarters of the cases the result is the `expected' one: the complexity coincides with the (essential) $\mathrsfs{J}$-depth of the respective semigroup. The exception (and perhaps the most interesting case) is the annular semigroup $\mathfrak{A}_{2n}$  of even degree in which case the complexity is the  $\mathrsfs{J}$-depth minus $1$. For the `rook' versions $P\mathfrak{B}_n$ and $P\mathfrak{A}_n$ it is shown that $c(P\mathfrak{B}_n)=c(\mathfrak{B}_n)$ and $c(P\mathfrak{A}_{2n-1})=c(\mathfrak{A}_{2n-1})$ for all $n\ge 1$. The computation of $c(P\mathfrak{A}_{2n})$ is left as an open problem.
\end{abstract}

\keywords{Krohn--Rhodes complexity, Brauer type semigroup, pseudovariety of finite semigroups}
\subjclass[2010]{20M07, 20M20, 20M17}
\maketitle

\section{Introduction and Background}
It follows from the famous Krohn--Rhodes Prime Decomposition Theorem \cite{krohnrhodes} that  each finite semigroup $S$ divides an iterated  wreath product $$A_n\wr G_n\wr A_{n-1}\wr \cdots \wr A_1\wr G_1\wr A_0$$
were the $G_i$ are groups and the $A_i$ are aperiodic semigroups. The number $n$ of non-trivial group components of the shortest such iterated product is the \emph{group complexity} or \emph{Krohn--Rhodes complexity} of the semigroup $S$. The question whether this number is algorithmically computable given the semigroup $S$ as input is perhaps \textbf{the} most fruitful research problem in finite semigroup theory. To the author's knowledge, this problem is still open despite the tremendous effort that has been spent on it over the years.

Concerning \emph{classes} of abstract semigroups, the pseudovariety $L\mathbf{G}\malc\mathbf{A}$ is the largest one which contains semigroups of arbitrarily high complexity and for which at present an algorithm is known which computes the complexity of each member --- this includes $\mathbf{DS}$ and thus completely regular semigroups (unions of groups). Another result is that the ``complexity-$\frac{1}{2}$'' pseudovarieties $\mathbf{A}*\mathbf{G}$ and $\mathbf{G}*\mathbf{A}$ have decidable membership (the latter being contained in $L\mathbf{G}\malc\mathbf{A}$).  On the other hand, the complexity of many naturally occurring \emph{individual} and concrete semigroups is known: these include the  semigroup of all transformations of a finite set and the semigroup of all endomorphisms of a finite vector-space \cite{rhodesTn}, as well as the semigroup of all binary relations on a finite set \cite{rhodesBn}. More recently, Kambites \cite{kambites} calculated the complexity of the semigroup of all upper triangular matrices over a finite field. The present paper intends to contribute to the latter kind of results. Indeed, we shall present a calculation of the complexity of the Brauer  semigroup $\mathfrak{B}_n$ and the annular semigroup $\mathfrak{A}_n$ (these occur originally in representation theory of associative algebras but have recently attracted considerable attention among semigroup theorists). It turns out that the cases of $\mathfrak{B}_n$ and $\mathfrak{A}_{2n+1}$ can be treated in a straightforward fashion by the use of arguments that apply to transformation semigroups and linear semigroups. The case of the annular semigroup $\mathfrak{A}_{2n}$ of even degree is somehow different. Although the problem can be solved by  use of the machinery developed by the Rhodes school, the solution requires quite a bit of care and is much less obvious. Actually, the author was not able to compute the complexity $c(\mathfrak{A}_{2n})$ directly. The strategy is rather to look at a certain natural subsemigroup $\mathfrak{EA}_{2n}$ first and calculate the complexity of this subsemigroup. In a second step it is then shown that the complexity of the full semigroup $\mathfrak{A}_{2n}$ does not exceed the complexity of the `even' subsemigroup $\mathfrak{EA}_{2n}$. It should be mentioned that none of the semigroups $\mathfrak{B}_n$ or $\mathfrak{A}_n$ is contained in $L\mathbf{G}\malc\mathbf{A}$ except for $n=1$.

The paper is organized as follows. In Section \ref{prelim} we collect all preliminaries on Brauer and annular semigroups as well as the basics of Krohn--Rhodes complexity needed in the sequel. In Section \ref{Brauer} the complexity of the Brauer semigroup is computed to be $c(\mathfrak{B}_n)=\lfloor\frac{n}{2}\rfloor$ which is exactly the (essential) $\mathrsfs{J}$-depth. It is also shown that the partial Brauer semigroup $P\mathfrak{B}_n$ has the same complexity as the `total' counterpart $\mathfrak{B}_n$. In Section \ref{annular} we first treat the annular semigroup of even degree and show that $c(\mathfrak{A}_{n})=\frac{n}{2}-1$. This is the difficult case and is treated with the help of a certain subsemigroup, the \emph{even annular semigroup} $\mathfrak{EA}_n$. Afterwards the odd degree case is treated which is again, in a sense, standard. Finally, some remarks on  the partial versions $P\mathfrak{A}_n$ are given. In the odd case, the complexity is the same as that of their total counterparts while the computation of the complexity in the even case is left as an open problem.

Throughout the paper, all semigroups are assumed to be finite. For background information on (finite) semigroups the reader is referred to the monographs by Almeida \cite{almeidauniv} and Rhodes and Steinberg \cite{qtheory}.
\section{Preliminaries}\label{prelim}
\subsection{Brauer type semigroups} Here we present the basic definitions and results concerning Brauer type semigroups. For each positive integer $n$ we are going to define:
\begin{itemize}
\item the \emph{partition semigroup} $\mathfrak{C}_n$,
\item the \emph{Brauer semigroup} $\mathfrak{B}_n$,
\item the \emph{partial} Brauer semigroup $P\mathfrak{B}_n$,
\item the \emph{Jones semigroup} $\mathfrak{J}_n$,
\item the \emph{annular semigroup} $\mathfrak{A}_n$,
\item the \emph{partial} annular semigroup $P\mathfrak{A}_n$.
\end{itemize}
The semigroups $\mathfrak{C}_n$, $\mathfrak{B}_n$, $\mathfrak{A}_n$ and $\mathfrak{J}_n$ arise as vector space bases of certain associative algebras which are
relevant in representation theory \cite{brauer, halversonram, jonesannular, grahamlehrer}. The semigroup structure and related questions
for the above-mentioned semigroups have been studied  by several authors, see, for example, \cite{aubrauer, adv2, kudmaz, malmaz, Maz1, Maz2}.

We start with the definition of $\mathfrak{C}_n$. For each positive integer $n$ let
$$[n]=\{1,\dots,n\},\quad [n]'=\{1',\dots,n'\},\quad [n]''=\{1'',\dots,n''\}$$
be three pairwise disjoint copies of the set of the first $n$ positive integers and put
$$\widetilde{[n]}=[n]\cup [n]'.$$
The base set of the partition semigroup $\mathfrak{C}_n$ is the set of all partitions of the set $\widetilde{[n]}$; throughout, we
consider a partition of a set and the corresponding equivalence relation on that set as two different views of the same
thing and without further mention we freely switch between these views, whenever it seems to be convenient. For
$\xi,\eta\in \mathfrak{C}_n$, the product $\xi\eta$ is defined (and computed) in four steps \cite{Wil}:
\begin{enumerate}
\item Consider the $'$-analogue of $\eta$: that is, define $\eta'$
on ${[n]'}\cup {[n]''}$ by
$${x'}\mathrel{\eta'}{y'}:\Leftrightarrow x\mathrel{\eta}
y\text{ for all } x,y\in \widetilde{[n]}.$$
\item Let $\left<\xi,\eta\right>$ be the equivalence relation on
$\widetilde{[n]}\cup {[n]''}$ generated by $\xi\cup {\eta'}$, that is, set $\left<\xi,\eta\right>:=(\xi\cup {\eta'})^t$
where $^t$ denotes the transitive closure.
\item Forget all elements having a single prime $'$: that is, set
$$\left<\xi,\eta\right>^\circ:=\left<\xi,\eta\right>|_{[n]\cup{[n]''}}.$$
\item Replace  double primes with single primes to
obtain the product $\xi\eta$: that is, set
$$x\mathrel{\xi\eta}y:\Leftrightarrow f(x)\mathrel{\left<\xi,\eta\right>^\circ}f(y)
\text{ for all }x,y\in \widetilde{[n]}$$ where $f:\widetilde{[n]}\to [n]\cup{[n]''}$ is the bijection
$$x\mapsto x, x'\mapsto x'' \text{ for all } x\in [n].$$
\end{enumerate}

For example, let $n=5$ and
$$\xi\rule[-22.5mm]{0pt}{45mm}=
\begin{picture}(40,0)(0,22.5)
\unitlength=0.8mm \drawrect(0,15,9,54) \put(3,8){5} \put(3,18){4} \put(3,28){3} \put(3,38){2} \put(3,48){1}
\drawrect(37,5,46,14) \put(40,8){$5'$} \drawrect(37,15,46,24) \put(40,18){$4'$} \put(40,28){$3'$}
\drawrect(37,35,46,44) \put(40,38){$2'$} \put(40,48){$1'$}
\drawline[AHnb=0](0,5)(0,14)(25,14)(25,54)(46,54)(46,45)(34,45)(34,34)(46,34)(46,25)(34,25)(34,5)(0,5)
\end{picture}\ ,\qquad
\eta=
\begin{picture}(40,0)(0,22.5)
\unitlength=0.8mm \drawrect(0,5,9,14) \put(3,8){5} \put(3,18){4} \put(3,28){3} \put(3,38){2} \put(3,48){1}
\drawrect(37,5,46,14) \put(40,8){$5'$} \drawrect(0,15,46,24) \put(40,18){$4'$} \put(40,28){$3'$} \put(40,38){$2'$}
\put(40,48){$1'$} \drawline[AHnb=0](0,45)(0,54)(46,54)(46,35)(37,35)(37,45)(0,45)
\drawline[AHnb=0](0,25)(0,44)(9,44)(9,34)(46,34)(46,25)(0,25)
\end{picture}\ .
$$
Then
$$\langle\xi,\eta\rangle\rule[-22.5mm]{0pt}{50mm}=
\begin{picture}(80,0)(0,22.5)
\unitlength=0.8mm \drawrect(0,15,9,54) \put(3,8){5} \put(3,18){4} \put(3,28){3}
\put(3,38){2} \put(3,48){1} \drawrect(37,5,46,14) \put(40,8){$5'$} \drawrect(37,15,83,24) \put(40,18){$4'$}
\put(40,28){$3'$} \put(40,38){$2'$} \put(40,48){$1'$} \drawrect(74,5,83,14) \put(77,8){$5''$} \put(77,18){$4''$}
\put(77,28){$3''$} \put(77,38){$2''$} \put(77,48){$1''$}
\drawline[AHnb=0](0,5)(0,14)(25,14)(25,54)(83,54)(83,25)(34,25)(34,5)(0,5)
\end{picture}
$$
and
$$\xi\eta\rule[-22.5mm]{0pt}{50mm}=
\begin{picture}(40,0)(0,22.5)
\unitlength=0.8mm \drawrect(0,15,9,54) \put(3,8){5} \put(3,18){4} \put(3,28){3} \put(3,38){2} \put(3,48){1}
\drawrect(37,5,46,14) \put(40,8){$5'$} \drawrect(37,15,46,24) \put(40,18){$4'$} \put(40,28){$3'$} \put(40,38){$2'$}
\put(40,48){$1'$} \drawline[AHnb=0](0,5)(0,14)(25,14)(25,54)(46,54)(46,25)(34,25)(34,5)(0,5)
\end{picture}\ .$$

This multiplication is associative making $\mathfrak{C}_n$ a semigroup with identity $1$ where
$$1=\{\{k,k'\}\mid k\in [n]\}.$$
The group of units of $\mathfrak{C}_n$ is the symmetric group $\Sim_n$ (acting on $[n]$ on the right) with canonical embedding
$\Sim_n\hookrightarrow \mathfrak{C}_n$ given by
$$\sigma\mapsto \{\{k,(k\si)'\}\mid k\in [n]\} \text{ for all } \si\in
\Sim_n.$$ More generally, the semigroup of all (total) transformations $\mathfrak{T}_n$ of $[n]$ acting on the right is also naturally
embedded in $\mathfrak{C}_n$ by
\begin{equation}
\label{partialtransformations} \phi\mapsto \{\{k'\}\cup
k\phi^{-1}\mid k\in [n]\}.
\end{equation}
If $k$ is not in the image of $\phi$ then $\{k'\}$ forms by definition a singleton class. The equivalence classes of
some $\xi\in \mathfrak{C}_n$ are usually referred to as \emph{blocks}; the \emph{rank} $\rk\xi$ is the number of blocks of $\xi$
whose intersection with $[n]$ as well as with $[n]'$ is not empty --- this coincides with the usual notion of rank of a
mapping on $[n]$ in case $\xi$ is in the image of the embedding \eqref{partialtransformations}. It is  known that the
rank characterizes the $\Dc$-relation in $\mathfrak{C}_n$ \cite{Maz1,kudmaz}: for any $\xi,\eta\in\mathfrak{C}_n$, one has
$\xi\mathrel{\Dc}\eta$ if and only if $\rk\xi=\rk\eta$.

The semigroup $\mathfrak{C}_n$ admits a natural inverse involution making it a regular $*$-semi\-group: consider first the permutation $^*$
on $\widetilde{[n]}$ that swaps primed with unprimed elements, that is, set
\[
k^*=k',\ (k')^*=k\text{ for all } k\in [n].
\]
Then define, for $\xi\in \mathfrak{C}_n$,
\[
x\mathrel{\xi^*}y:\Leftrightarrow {x^*}\mathrel{\xi}{y^*} \text{ for all }x,y\in \widetilde{[n]}.
\]
That is, $\xi^*$ is obtained from $\xi$ by interchanging in $\xi$ the primed with the unprimed elements. It is easy to
see that
\begin{equation}
\label{projections} \xi^{**}=\xi,\ (\xi\eta)^*=\eta^*\xi^*\ \text{ and }\ \xi\xi^*\xi=\xi\ \text{ for all }\
\xi,\eta\in \mathfrak{C}_n.
\end{equation}
The elements of the form $\xi\xi^*$ are called \emph{projections}. They are idempotents (as one readily sees from the
last equality in \eqref{projections}).
We note that in the group $\Hc$-class of any projection, the involution $^*$ coincides with the inverse operation in
that group.

The \emph{Brauer semigroup} $\mathfrak{B}_n$ can
be conveniently defined as a subsemigroup of $\mathfrak{C}_n$: namely, $\mathfrak{B}_n$
 consists of all elements of $\mathfrak{C}_n$ all of
whose blocks have size $2$; the \emph{partial} Brauer semigroup $P\mathfrak{B}_n$ consists of all elements of $\mathfrak{C}_n$ all of whose blocks have size at most~2. It is useful to think of the elements
of $P\mathfrak{B}_n$ respectively $\mathfrak{B}_n$ in terms of \emph{diagrams}. These are pictures like the one in Figure \ref{membersofBn}.
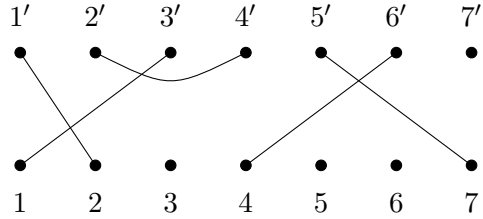
\begin{figure} [ht]
\centering
  \begin{tikzpicture}
[scale=0.5]
\foreach \x in {0,2,4,6,8,10,12} \foreach \y in {0,3} \filldraw (\x,\y) circle (4pt);
\draw (0,-1) node {$1$}; \draw (2,-1) node {$2$};\draw (4,-1) node {$3$};\draw (6,-1) node {$4$};
\draw (8,-1) node {$5$};\draw (10,-1) node {$6$};\draw (12,-1) node {$7$};
\draw (0,4) node {$1'$};\draw (2,4) node {$2'$};\draw (4,4) node {$3'$};\draw (6,4) node {$4'$};
\draw (8,4) node {$5'$};\draw (10,4) node {$6'$};\draw (12,4) node {$7'$};
\draw (0,0) -- (4,3); \draw (0,3) -- (2,0); \draw (6,0) -- (10,3); \draw (8,3) -- (12,0); 
\draw (2,3) .. controls (4,2) and (4,2) .. (6,3);
\end{tikzpicture}
\caption[diagram]{A diagram in $P\mathfrak{B}_7$}\label{membersofBn}
\end{figure}

  Both
semigroups $\mathfrak{B}_n$ and $P\mathfrak{B}_n$ are closed under $^*$. In both types of semigroups, the group $\Hc$-class of
a projection $\xi\xi^*$ of rank $t$ is isomorphic (as a regular $*$-semigroup) with the symmetric group $\Sim_t$. Let
$\xi\in\mathfrak{B}_n$ be of rank $t$ and let
$$\{k_1,l'_1\},\dots,\{k_t,l'_t\},\ \mbox{for some } k_i,l_i\in [n],$$
be the blocks of $\xi$ which contain an element of $[n]$ and of $[n]'$.
Then $\{k_1,\dots, k_t\}$ and $\{l'_1,\dots,l'_t\}$ is the \emph{domain} $\dom \xi$ respectively \emph{range}
$\ran \xi$ of $\xi$. For any projection $\ep$ we obviously have $\ran \ep=(\dom \ep)'$.

The \emph{Jones semigroup} (also called \emph{Temperley--Lieb semigroup}, see also \cite{temperleylieb})\footnote{Following \cite{FitzGerald}, we use the term \emph{Jones semigroup}.} $\mathfrak{J}_n$ is the subsemigroup of $\mathfrak{B}_n$ consisting of all diagrams that can be drawn in the plane within a rectangle (as in Figure \ref{membersofBn}) in a way such that any two of its lines have empty intersection. These diagrams are called \emph{planar}. It is well-known  and easy to see  that $\mathfrak{J}_n$ is aperiodic \cite{FitzGerald}.

Next we define the annular semigroup $\mathfrak{A}_n$ \cite{jonesannular} and the partial annular semigroup $P\mathfrak{A}_n$. These will also be realized as  certain subsemigroups of the (partial) Brauer
semigroup. For this purpose it is convenient to first represent the elements of $P\mathfrak{B}_n$ (and therefore of $\mathfrak{B}_n$) as \emph{annular diagrams}.
Consider an annulus $A$ in the complex plane, say $A=\{z\mid 1<|z|<2\}$ and identify the elements of $\wt{[n]}$ with
certain points of the boundary of $A$ via
$$k\mapsto e^{\frac{2\pi i(k-1)}{n}}\ \text{ and }\ k'\mapsto
2e^{\frac{2\pi i(k-1)}{n}}\ \text{ for all }\ k\in [n].$$ For $\xi\in P\mathfrak{B}_n$ (in particular, for $\xi\in \mathfrak{B}_n$) take a copy of $A$ and link any
$x,y\in\wt{[n]}$ with $x\ne y$ and $\{x,y\}\in \xi$ by a path (called \emph{string}) running entirely in $A$ (except for its
endpoints). For example, the element $\xi\in P\mathfrak{B}_4$ given by
$$\xi=\{\{1\},\{1'\},\{2',4'\},\{2,3'\},\{3,4\}\}$$
can then be represented by the annular diagram in Figure \ref{diagram}.
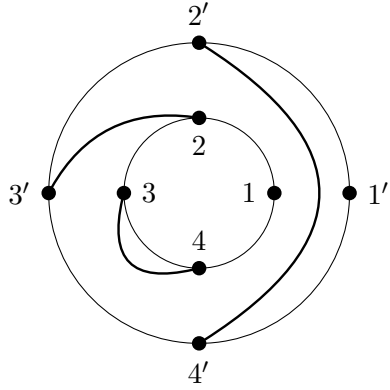
\begin{figure}[ht]
\centering
\unitlength=0.8mm
\begin{picture}(50,64)(0,-32)
\drawcircle(25,0,25) \drawcircle(25,0,50) \gasset{AHnb=0,ExtNL=y,Nfill=y,Nw=2,Nh=2,Nmr=1,linewidth=.4}
\node[NLangle=-90,NLdist=2](D1)(25,-25){$4'$} \node[NLangle=180,NLdist=2](C1)(0,0){$3'$} \node[NLdist=2](B1)(25,25){$2'$}
\node[NLangle=0,NLdist=2](A1)(50,0){$1'$} \node[NLdist=2](D2)(25,-12.5){$4$} \node[NLangle=0,NLdist=2](C2)(12.5,0){$3$}
\node[NLangle=-90,NLdist=2](B2)(25,12.5){$2$} \node[NLangle=180,NLdist=2](A2)(37.5,0){$1$} 
\drawedge[curvedepth=-20](D1,B1){} \drawedge[curvedepth=5](C1,B2){} \drawedge[curvedepth=7.5](D2,C2){}
\end{picture}
\caption{Annular diagram representation of
 a member of $P\mathfrak{A}_4$}\label{diagram}
\end{figure}
Paths representing blocks of the form $\{x,y'\}$ [$\{x,y\}$ and
$\{x',y'\}$, respectively] for some $x,y\in[n]$ are called
\emph{through strings} [\emph{inner} and \emph{outer strings},
respectively]. The \emph{annular semigroup} $\mathfrak{A}_n$ by definition
consists of all elements of $\mathfrak{B}_n$ that have a representation as an
annular diagram any two of whose strings have empty intersection. One can compose annular diagrams in an obvious way, modelling
the multiplication in $\mathfrak{B}_n$ --- from this it follows  that $\mathfrak{A}_n$
is closed under the multiplication of $\mathfrak{B}_n$. Clearly, $\mathfrak{A}_n$ is
closed under $^*$, as well.
Analogously, one gets the \emph{partial} annular semigroup $P\mathfrak{A}_n$  by
considering all elements of $P\mathfrak{B}_n$ which admit a representation by an annular diagram in which any two distinct strings have empty intersection. Again each $P\mathfrak{A}_n$ is closed under $^*$.

The notions of ``planar diagram'' and ``annular diagram'' make sense also for the elements of $\mathfrak{C}_n$; one can define the \emph{planar monoid} $\mathfrak{P}_n$ consisting of all members of $\mathfrak{C}_n$ that admit a representation as a planar diagram in which (the prepresentation of) any two distinct blocks have empty intersection (for example, the elements $\xi$ and $\eta$ in the example after the definition of the multiplication in $\mathfrak{C}_n$ belong to $\mathfrak{P}_5$), see \cite{aubrauer,halversonram}. Similarly, one could define the \emph{planar annular monoid} $\mathfrak{P}\mathfrak{A}_n$, consisting of all members of $\mathfrak{C}_n$ that admit a representation as an annular diagram in which (the representation of) any two distinct blocks have empty intersection. However, from our point of view, this gives nothing new: $\mathfrak{P}_n$ is known to be isomorphic with $\mathfrak{J}_{2n}$ for each $n$ \cite{aubrauer,halversonram} while $\mathfrak{P}\mathfrak{A}_n$ can be shown to be isomorphic with the even annular monoid $\mathfrak{EA}_{2n}$ (to be defined below) for each $n$.

 Finally, we fix the following notation: if the semigroup $\mathfrak{M}$ happens to be a monoid then its group of units is denoted by $\mathfrak{M}^\times$ while the \emph{singular part of $\mathfrak{M}$}, that is, the subsemigroup  of all non-invertible elements $\mathfrak{M}\setminus \mathfrak{M}^\times$ is denoted by $\Sing \mathfrak{M}$.

\subsection{Krohn--Rhodes complexity} Here we present the basics of Krohn--Rhodes complexity needed in the sequel. A comprehensive treatment of the subject can be found in Part II of the monograph \cite{qtheory}. Throughout, the complexity of a semigroup $S$ is denoted by $c(S)$. A $\mathrsfs{J}$-class of a semigroup is \emph{essential} if it contains a non-trivial subgroup. The \emph{depth} of a semigroup is the length $n$ of the longest chain $J_1 > J_2>\cdots > J_n$ of essential $\mathrsfs{J}$-classes. The complexity $c(S)$ of a semigroup $S$ can never exceed its depth \cite[Theorem 4.9.15]{qtheory}.
\begin{lemma}\label{ideal lemma} For each semigroup $S$ and for each ideal $I$ of $S$ the inequality $c(S)\le c(I)+ c(S/I)$ holds.
\end{lemma}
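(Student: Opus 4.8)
The plan is to establish the inequality $c(S) \le c(I) + c(S/I)$ by exploiting the fundamental behaviour of complexity under the relevant algebraic constructions. The starting point is the observation that the semigroup $S$ can be reconstructed, up to division, from the ideal $I$ and the Rees quotient $S/I$ via a wreath-product-type decomposition. Concretely, I would invoke the standard result from the Rhodes school (see Part II of \cite{qtheory}) that $S$ divides a wreath product built from $S/I$ acting on $I$ (with an adjoined zero), so that $S$ is recovered by stacking the ``$I$-part'' beneath the ``$S/I$-part''. The key homomorphism-theoretic fact is that the natural projection $S \to S/I$ collapses precisely the ideal $I$, and the fibres of this projection over the nonzero elements of $S/I$ are essentially copies of structures controlled by $I$.

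The main steps, in order, are as follows. First I would recall the two basic axioms/properties of complexity as a function on pseudovarieties or on individual semigroups: namely that complexity is monotone under division ($T \prec S \Rightarrow c(T) \le c(S)$), and that it is subadditive under the Mal'cev (two-sided semidirect / block) product in the sense that $c(U \malc V) \le c(U) + c(V)$ for the relevant operators. Second, I would exhibit $S$ as dividing a suitable product of a semigroup whose complexity equals $c(I)$ and a semigroup whose complexity equals $c(S/I)$; the cleanest route is through the derived semigroup (or kernel) of the projection $\pi \colon S \to S/I$, showing that this kernel has complexity at most $c(I)$. Third, combining the division $S \prec (\text{kernel}) \malc (S/I)$ with subadditivity and monotonicity yields $c(S) \le c(I) + c(S/I)$ directly.

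The step I expect to be the main obstacle is the second one: controlling the complexity of the derived/kernel semigroup of the projection and identifying it with something of complexity at most $c(I)$. The subtlety is that the Rees quotient identifies all of $I$ with a single zero, so one must verify that the relational morphism $\pi$ has a kernel category (or derived semigroup) that divides into pieces each governed by $I$ itself, rather than by some larger object. This requires the Derived Category Theorem or the corresponding two-sided version, together with a careful accounting of how idempotents and subgroups of $S$ lying inside $I$ versus outside $I$ contribute. Once that kernel estimate is in hand, the remaining inequalities are formal consequences of the additive and monotonicity properties of $c$, so the heart of the argument is genuinely this single estimate. In practice, since this inequality is entirely standard in the theory, the cleanest writeup simply cites the appropriate result in \cite{qtheory} rather than reproving the kernel estimate from scratch.
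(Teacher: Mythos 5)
The paper does not actually prove this lemma: it is Rhodes' \emph{Ideal Theorem}, quoted verbatim from \cite[Theorem 4.9.17]{qtheory}. So your closing remark --- that the cleanest writeup simply cites \cite{qtheory} --- is exactly what the paper does, and that part of your proposal is fine. What is not fine is the argument you sketch as the mathematics behind the citation: its crucial step is false, not merely delicate. You propose to decompose $S$ over $S/I$ via the Derived Category/Semigroup Theorem and then to show that the derived semigroup of the projection $\pi\colon S\to S/I$ has complexity at most $c(I)$. Take $S=\mathfrak{T}_n$ and let $I$ be its minimal ideal, the set of constant maps; $I$ is a right-zero semigroup, so $c(I)=0$. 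The fibre of $\pi$ over $0$ is $I$, and each $s\in S$ acts on this fibre by $c_a\mapsto c_as=c_{as}$, where $c_a$ denotes the constant map with value $a$. This action is faithful, so the local monoid of the derived category at the object $0$ (namely, all arrows $(0,s)\colon 0\to 0$, identified when they induce the same map on the fibre) is a copy of $\mathfrak{T}_n$ itself. Hence the derived semigroup has complexity at least $n-1$ while $c(I)=0$, and the Derived Semigroup Theorem yields only the vacuous bound $c(S)\le c(D_\pi)+c(S/I)$. No ``careful accounting of idempotents and subgroups'' repairs this: the kernel of $\pi$ is simply not governed by $I$. Note also that your opening decomposition is not even well posed, since $S/I$ does not act on $I$: the right action of $S$ on $I$ does not factor through the Rees quotient (in the example above, distinct constants $c_a,c_b$ act on $I$ as distinct constant maps, yet both are identified with $0$ in $S/I$).

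The two-sided escape you allude to fails for the dual reason. There \emph{is} a correct decomposition of $S$ over $S/I$ whose fibre data are controlled by $I^1$ --- one must record, at the position where a product first falls into $I$, the lifts of both the prefix and the suffix from $S/I$ --- but this is a block (two-sided) product decomposition, and Krohn--Rhodes complexity is not subadditive over block products; that subadditivity is precisely what separates the one-sided from the bilateral hierarchy. So whichever product you use, one of your two ingredients breaks: with the wreath product, subadditivity holds but the kernel estimate is false; with the block product, the kernel estimate holds but subadditivity is false. This tension is exactly why the Ideal Theorem is a genuine theorem of Rhodes with a substantial proof rather than a formal consequence of decomposition theory plus monotonicity. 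The correct course --- and the one the paper takes --- is to cite \cite[Theorem 4.9.17]{qtheory} and not to suggest that the statement follows from the derived-semigroup machinery in the way you describe.
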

The latter statement is usually known as the \emph{Ideal Theorem} \cite[Theorem 4.9.17]{qtheory}. The next result (due to Allen and Rhodes) can be also found as Proposition 4.12.20 in \cite{qtheory}.
\begin{lemma}\label{submonoids} Let $S$ be a semigroup and let $e$ be an idempotent of $S$; then $c(SeS)=c(eSe)$.
\end{lemma}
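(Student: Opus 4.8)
The plan is to prove the two inequalities $c(eSe)\le c(SeS)$ and $c(SeS)\le c(eSe)$ separately. The first is immediate: $eSe$ is a subsemigroup of the ideal $SeS$, since for $s\in S$ one has $ese=e\,e\,(se)$, which exhibits $ese$ as a member of $SeS=\{aeb\mid a,b\in S\}$ (here $e\in S$ itself plays the role of a local identity, so no $S^1$ is needed), and $eSe$ is closed under multiplication because $e$ is idempotent. As complexity is monotone under division, $c(eSe)\le c(SeS)$.

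For the reverse inequality I would realize $SeS$ as a homomorphic image of a Rees matrix semigroup over the local monoid $M=eSe$. Take $I=Se$ and $\Lambda=eS$, viewed merely as (finite, nonempty) index sets, and define a sandwich matrix $P\colon\Lambda\times I\to M$ by $P(r,\ell)=r\ell$; note that for $r=es\in eS$ and $\ell=s'e\in Se$ one has $r\ell=ess'e\in eSe=M$, so this is well defined. Let $T=\mathcal{M}(M;I,\Lambda;P)$ be the associated Rees matrix semigroup, with elements $(\ell,m,r)$ and product $(\ell_1,m_1,r_1)(\ell_2,m_2,r_2)=(\ell_1,\,m_1(r_1\ell_2)m_2,\,r_2)$. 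Then $\phi\colon T\to SeS$, $(\ell,m,r)\mapsto \ell m r$, is a homomorphism: the sandwich factor $r_1\ell_2$ is precisely what is needed so that $\phi\big((\ell_1,m_1,r_1)(\ell_2,m_2,r_2)\big)=\ell_1m_1r_1\ell_2m_2r_2=\phi(\ell_1,m_1,r_1)\phi(\ell_2,m_2,r_2)$. Moreover $\phi$ is surjective, since an arbitrary $aeb\in SeS$ is the image of $(ae,e,eb)$. Hence $SeS$ divides $T$ and $c(SeS)\le c(T)$.

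It then remains only to show $c(T)\le c(M)=c(eSe)$, and this is the single point at which I invoke genuine complexity theory. The content is that forming a Rees matrix semigroup over $M$ does not raise complexity: the coordinate data is governed by the rectangular band $B=I\times\Lambda$, which is aperiodic, and one has the standard division $T\prec M\wr B$. Combined with the wreath‑product bound $c(M\wr B)\le c(M)+c(B)$ and $c(B)=0$, this gives $c(T)\le c(M)$. Stringing the three steps together yields $c(eSe)\le c(SeS)\le c(T)\le c(eSe)$, and hence equality.

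The main obstacle is exactly this last step. Applying the Ideal Theorem (Lemma~\ref{ideal lemma}) along the chain of $\mathrsfs{J}$-classes of $SeS$ would only produce an upper bound in which the group complexities of the various $\mathrsfs{J}$-classes below $e$ are \emph{added}, which is far too weak to recover $c(eSe)$. The purpose of the Rees matrix cover is precisely to recognize that all of this group complexity already lives inside the one local monoid $eSe$ --- every maximal subgroup of $SeS$ divides a maximal subgroup of $eSe$ --- so that it is paid for only once, the remaining coordinate data contributing nothing beyond an aperiodic (rectangular band) layer. Verifying that this aperiodic layer really is free, i.e.\ that $c(T)\le c(eSe)$, is what carries the whole lemma.
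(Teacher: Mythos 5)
The paper does not prove this lemma at all: it is imported as a known result of Allen and Rhodes, with a pointer to Proposition 4.12.20 of Rhodes--Steinberg \cite{qtheory}. So your argument can only be compared with the literature, and it holds up --- it is, in essence, the classical Allen--Rhodes argument. The easy inequality ($eSe\le SeS$ plus monotonicity of $c$ under division) is fine. The Rees matrix cover is set up correctly: $P(r,\ell)=r\ell$ does take values in $eSe$; the map $\phi(\ell,m,r)=\ell m r$ is well defined into $SeS$ (using $em=m=me$ for $m\in eSe$), is a homomorphism precisely because the sandwich entry supplies the missing factor $r_1\ell_2$, and is onto since $aeb=\phi(ae,e,eb)$; hence $c(SeS)\le c(T)$. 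The one step you merely assert --- the ``standard division'' $T\prec M\wr B$ with $B=I\times\Lambda$ a rectangular band --- is indeed standard and true, but since you yourself identify it as the load-bearing step, a complete write-up should substantiate it. It can be done in two lines: with the convention $M\wr B=M^{B^1}\rtimes B$, action $({}^{b}g)(x)=g(xb)$ and product $(g_1,b_1)(g_2,b_2)=(g_1\cdot{}^{b_1}g_2,\,b_1b_2)$, the map sending $(\ell,m,r)$ to the pair $\bigl(f,(\ell,r)\bigr)$, where $f(j,\mu)=P(\mu,\ell)\,m$ for $(j,\mu)\in B$ and $f(1)=m$ at the adjoined identity, is an injective homomorphism, as a direct computation with the Rees product confirms; so $T$ even embeds in $M\wr B$. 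Subadditivity of complexity over wreath products and aperiodicity of $B$ then give $c(T)\le c(M)$, closing the chain $c(eSe)\le c(SeS)\le c(T)\le c(eSe)$. In summary: your proof is correct and self-contained modulo that one embedding, and it supplies an argument for a statement the paper itself only cites.
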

The following  result \cite[Proposition 4.12.23]{qtheory} is useful for computing the complexity of the full transformation semigroup $\mathfrak{T}_n$ and the full linear semigroup $\mathfrak{M}_n(\mathbb{F}_q)$ over a finite field. In our situation it is helpful for the Brauer semigroup $\mathfrak{B}_n$ and the annular semigroup $\mathfrak{A}_{2n+1}$ of odd degree. For any semigroup $S$, we denote by $E(S)$ the set of all idempotents of $S$.

\begin{proposition}\label{principal proposition} Suppose that $S$ is a monoid with non-trivial group of units $G$ such that $S=\left<G, e\right>$ for some idempotent $e\notin G$ and $SeS\subseteq \left<E(S)\right>$. Then $c(S)=c(eSe)+1$.
\end{proposition}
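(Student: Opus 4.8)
The plan is to prove the equality $c(S)=c(eSe)+1$ by establishing the two inequalities separately. For the upper bound $c(S)\le c(eSe)+1$, I would invoke the Ideal Theorem (Lemma \ref{ideal lemma}) applied to a suitable ideal of $S$. The natural candidate is the ideal $SeS$ generated by the idempotent $e$. By Lemma \ref{submonoids} we have $c(SeS)=c(eSe)$, so it suffices to control $c(S/SeS)$. Here the key observation should be that, since $S=\langle G,e\rangle$ and $G$ is the group of units, the Rees quotient $S/SeS$ is generated (as a semigroup with zero) by the image of $G$ together with $0$; everything involving $e$ has collapsed to $0$. I expect $S/SeS$ to be a finite semigroup whose only non-trivial structure sits inside a single group $\mathscr{J}$-class (the image of $G$) above the zero, making it of complexity at most $1$. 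Combining, $c(S)\le c(SeS)+c(S/SeS)\le c(eSe)+1$.

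For the lower bound $c(S)\ge c(eSe)+1$, **the hard part will be** producing the extra unit of complexity beyond $c(eSe)$. The hypothesis $SeS\subseteq\langle E(S)\rangle$ is what makes this work, and I would use it as follows. First, $c(S)\ge c(SeS)=c(eSe)$ is immediate since $SeS$ is an ideal (complexity is monotone under division, and ideals embed). The point is to rule out equality. If one had $c(S)=c(eSe)$, I would aim to derive a contradiction by exhibiting, inside $S$, a configuration that forces complexity to strictly increase over its ideal. The standard tool in the Rhodes theory for detecting that complexity goes up by exactly one across a top group layer is a lower-bound result of the form: if $S$ has a non-trivial group of units $G$ acting suitably on an ideal of complexity $c$, and the structure is \emph{not} degenerate (which is precisely where $SeS\subseteq\langle E(S)\rangle$ enters, guaranteeing the ideal is generated by idempotents and so the group acts essentially), then $c(S)\ge c+1$.

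Concretely, I would try to identify the mechanism as follows. Since $G$ is non-trivial and $e\notin G$, the group of units sits strictly above the ideal $SeS$ in the $\mathscr{J}$-order, contributing a genuine group $\mathscr{J}$-class at the top. The condition $SeS\subseteq\langle E(S)\rangle$ says that the ideal is built entirely from idempotents of $S$, which should prevent the top group from being ``absorbed'' into the complexity of the ideal via an aperiodic reduction. The expected route is to apply one of the derived-category or two-sided-complexity lower bounds from \cite{qtheory} (the tools used to compute $c(\mathfrak{T}_n)$ and $c(\mathfrak{M}_n(\mathbb{F}_q))$, for which this proposition is explicitly designed), which certify that a top non-trivial group over an idempotent-generated ideal adds exactly one to the complexity.

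**The main obstacle** I anticipate is the lower bound, specifically verifying that the top group contributes a full unit rather than being subsumed. The upper bound via the Ideal Theorem is essentially bookkeeping once one checks $c(S/SeS)\le 1$. For the lower bound, the delicate issue is that complexity is \emph{not} in general additive across ideals --- the inequality in Lemma \ref{ideal lemma} can be strict in the wrong direction for lower bounds --- so one cannot simply argue $c(S)\ge c(SeS)+c(S/SeS)$. Instead one must use the hypothesis $SeS\subseteq\langle E(S)\rangle$ in an essential way to invoke the appropriate lower-bound theorem from the Rhodes school that handles exactly this situation, and the care lies in checking that the hypotheses of that theorem (the non-degeneracy of the group action on the idempotent-generated ideal) are met.
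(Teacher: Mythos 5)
The paper does not prove this proposition at all: it is quoted verbatim as Proposition 4.12.23 of \cite{qtheory}, so the comparison must be with the standard proof given there. Your upper-bound half matches that proof and is correct: since $S=\left<G,e\right>$, every product involving $e$ lies in $SeS$, and $SeS\cap G=\emptyset$ (an element $seS\ni g$ with $g$ a unit would force $e$ to be $\mathrsfs{J}$-equivalent to $1$, hence a unit), so $S=G\sqcup SeS$ and $S/SeS\cong G^0$, a group with zero, which has exactly one essential $\mathrsfs{J}$-class and therefore complexity $1$; the Ideal Theorem (Lemma \ref{ideal lemma}) together with $c(SeS)=c(eSe)$ (Lemma \ref{submonoids}) then gives $c(S)\le c(eSe)+1$.

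The genuine gap is in the lower bound, which is the entire content of the proposition: you correctly sense that a lower-bound theorem is needed, but you never identify it, and what you gesture at (``derived-category or two-sided-complexity lower bounds'' certifying that ``a top non-trivial group over an idempotent-generated ideal adds one'') does not exist as an off-the-shelf statement; stated in that generality it is not a theorem of \cite{qtheory}. The actual mechanism is the type II (Rhodes kernel) bound that this very paper records as Proposition \ref{kernelcomplexity}: if $S$ is a $\mathcal{T}_1$-semigroup that is not aperiodic, then $c(\mathsf{K}_{\mathbf G}(S))<c(S)$. Applying it requires two verifications that are absent from your sketch. First, $S$ must be a $\mathcal{T}_1$-semigroup; this is where the hypothesis $S=\left<G,e\right>$ is used a \emph{second} time (you use it only for the upper bound): all elements of $G$ are mutually $\mathrsfs{L}$-equivalent and $e\le_{\mathrsfs{L}}g$ for every $g\in G$, so $G\cup\{e\}$ is a generating set forming an $\mathrsfs{L}$-chain, and $S$ is not aperiodic because $G$ is non-trivial. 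Second, the hypothesis $SeS\subseteq\left<E(S)\right>$ enters not through any ``non-degeneracy of a group action'' but through the inclusion $\left<E(S)\right>\subseteq \mathsf{K}_{\mathbf G}(S)$ (only the easy half of Ash's theorem: idempotents are of type II and $\mathsf{K}_{\mathbf G}(S)$ is a subsemigroup), which yields $c(eSe)=c(SeS)\le c(\mathsf{K}_{\mathbf G}(S))<c(S)$, i.e.\ $c(S)\ge c(eSe)+1$. Without naming $\mathsf{K}_{\mathbf G}(S)$, the $\mathcal{T}_1$ property, and these two verifications, your lower bound is an appeal to a theorem that has not been exhibited, so the proof is incomplete at its essential step.
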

According to \cite[Definition 4.12.11]{qtheory} a semigroup $S$ is a $\mathcal{T}_1$\emph{-semigroup} if there exists an $\mathrsfs{L}$-chain $s_1\mathrel{{\le}_{\mathrsfs{L}}}s_2\mathrel{{\le}_{\mathrsfs{L}}}\cdots\mathrel{{\le}_{\mathrsfs{L}}}s_n$ of elements of $S$ such that $S$ is generated by $s_1,\dots,s_n$. The \emph{type II subsemigroup} $\mathsf{K}_{\mathbf G}(S)$ of a semigroup $S$ consists of all elements of $S$ that relate to $1$ under every relational morphism from $S$ to a group $G$. Ash's famous theorem \cite{ash} (verifying the Rhodes type II conjecture) states that $\mathsf{K}_{\mathbf G}(S)$ is the smallest subsemigroup of $S$ that contains all idempotents and is closed under weak conjugation. The combination of Theorems 4.12.14 and 4.12.8 in \cite{qtheory} yields:
\begin{proposition}\label{kernelcomplexity} For each $\mathcal{T}_1$-semigroup $S$ which is not aperiodic, the inequality $c(\mathsf{K}_{\mathbf G}(S))<c(S)$ holds.
\end{proposition}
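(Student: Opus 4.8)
The plan is to deduce the strict inequality from the sharp equality
$$c(S)=c(\mathsf{K}_{\mathbf G}(S))+1,$$
which I would establish for every non-aperiodic $\mathcal{T}_1$-semigroup $S$ by proving the two opposite inequalities separately and then combining them. Since $\mathsf{K}_{\mathbf G}(S)$ is a subsemigroup of $S$ --- by Ash's theorem it is generated by the idempotents under weak conjugation --- monotonicity of complexity under division already yields $c(\mathsf{K}_{\mathbf G}(S))\le c(S)$; so the entire content lies in producing the extra unit and, above all, in showing that it cannot be avoided. Both hypotheses on $S$ must be used here, since for aperiodic $S$ one has $c(S)=0=c(\mathsf{K}_{\mathbf G}(S))$ and equality, not strict inequality, holds.

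For the upper bound $c(S)\le c(\mathsf{K}_{\mathbf G}(S))+1$ I would invoke the general kernel--complexity theorem (the first of the two cited results), which holds for arbitrary $S$. By Ash's theorem $\mathsf{K}_{\mathbf G}(S)$ is exactly the set of elements forced to relate to the identity under every relational morphism from $S$ to a group, so up to complexity $S$ is obtained from its type II kernel by adjoining a single group layer, and such a layer raises complexity by at most $1$. This half is soft. For the matching lower bound $c(S)\ge c(\mathsf{K}_{\mathbf G}(S))+1$ I would invoke the $\mathcal{T}_1$ lower-bound machinery (the second cited result), whose hypotheses are met precisely because $S$ is generated by an $\mathrsfs{L}$-chain. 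The aim there is to certify that the top group genuinely costs one unit of complexity rather than being absorbed into the aperiodic operations that assemble $\mathsf{K}_{\mathbf G}(S)$ into $S$; the assumption that $S$ is not aperiodic is exactly what forces this top group to be nontrivial, so the layer cannot be discarded. Combining the two inequalities gives the displayed equality, and in particular $c(\mathsf{K}_{\mathbf G}(S))<c(S)$.

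I expect the lower bound to be the main obstacle. Monotonicity under division and the general kernel decomposition are routine, but strictness is delicate, and it is exactly the content the $\mathcal{T}_1$ hypothesis is designed to supply: the $\mathrsfs{L}$-chain generation is what guarantees that the group acting at the top of $S$ is detected by the complexity function and cannot be hidden below the kernel. This is the step that breaks down for general non-aperiodic semigroups, where a ``redundant'' top group can leave complexity unchanged and the strict inequality may fail.
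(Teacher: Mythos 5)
Your proposal is correct in substance and, at bottom, takes the same route as the paper: the paper's entire proof is the observation that Theorems 4.12.14 and 4.12.8 of \cite{qtheory} combine to give the statement, and your argument is likewise an assembly of that cited machinery rather than an independent proof. Two remarks on how your organization differs from the source. First, the detour through the equality $c(S)=c(\mathsf{K}_{\mathbf G}(S))+1$ buys nothing: complexity is integer-valued, so the strict inequality to be proved is literally the same assertion as your lower bound $c(S)\ge c(\mathsf{K}_{\mathbf G}(S))+1$; the upper-bound half is a correct theorem for arbitrary finite semigroups, but it plays no role here (and it is not ``soft'' --- the standard proof that adjoining a group layer costs at most one, i.e. $\mathbf{C}_n\malc\mathbf{G}\subseteq\mathbf{C}_{n+1}$, itself passes through Ash-type results). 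Second, the two cited theorems do not split into ``one gives the upper bound, one gives the lower bound'' as you surmise: in \cite{qtheory} both are consumed by the lower bound --- roughly, the $\mathcal{T}_1$-theorem shows that an $\mathrsfs{L}$-chain-generated semigroup with $c(S)=n+1$ already lies in $\mathbf{C}_n*\mathbf{G}$ (for such semigroups the aperiodic layer sitting above the group can be eliminated), while the companion result converts $\mathbf{C}_n*\mathbf{G}$-membership into $c(\mathsf{K}_{\mathbf G}(S))\le n$, the type II subsemigroup of a member of $\mathbf{V}*\mathbf{G}$ always lying in $\mathbf{V}$. Your wiring nevertheless yields a valid proof, since every fact you invoke is a true, citable theorem; it just distributes the content differently from the way the book does. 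Your closing sanity check is exactly right and is the heart of the matter: for a non-aperiodic idempotent-generated semigroup one has $\mathsf{K}_{\mathbf G}(S)=S$, so the strict inequality genuinely fails without the $\mathcal{T}_1$ hypothesis, and this is precisely why the paper works with the $\mathcal{T}_1$-semigroup $\mathfrak{EA}'_n$ rather than with an idempotent-generated subsemigroup directly.
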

The final preliminary result presents the well known characterization of the members of the pseudovariety $\mathbf{A}*\mathbf{G}$. Indeed, by the definition of the Ma\softl cev Product \cite{qtheory}, a semigroup $S$ belongs to $\mathbf{A}\malc\mathbf{G}$ if and only if $\mathsf{K}_{\mathbf G}(S)$ belongs to $\mathbf{A}$; from $\mathbf{A}*\mathbf{G}=\mathbf{A}\malc\mathbf{G}$ \cite[Theorem 4.8.4]{qtheory} we get:
\begin{proposition}\label{A*G} A semigroup $S$ belongs to $\mathbf{A}*\mathbf{G}$ if and only if $\mathsf{K}_{\mathbf G}(S)$ belongs to $\mathbf{A}$.
\end{proposition}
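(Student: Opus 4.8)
The plan is to obtain the statement as a corollary of two ingredients that are already recorded above: the description of the Mal\softl cev product $\mathbf{A}\malc\mathbf{G}$ in terms of the type II subsemigroup, and the deep identity $\mathbf{A}*\mathbf{G}=\mathbf{A}\malc\mathbf{G}$ of \cite[Theorem 4.8.4]{qtheory}. Thus the only thing I actually need to verify is the intermediate equivalence
\[
S\in\mathbf{A}\malc\mathbf{G}\iff \mathsf{K}_{\mathbf G}(S)\in\mathbf{A},
\]
after which rewriting $\mathbf{A}\malc\mathbf{G}$ as $\mathbf{A}*\mathbf{G}$ closes the argument immediately.

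To set this up I would first unwind the definition of the Mal\softl cev product. By definition, $S\in\mathbf{A}\malc\mathbf{G}$ means that there is a relational morphism $\tau\colon S\to G$ into a finite group $G$ such that $e\tau^{-1}\in\mathbf{A}$ for every idempotent $e$ of $G$; since a group has a unique idempotent, this reduces to the single condition $1\tau^{-1}\in\mathbf{A}$. On the other side, Ash's theorem identifies the type II subsemigroup as the intersection
\[
\mathsf{K}_{\mathbf G}(S)=\bigcap\bigl\{\,1\tau^{-1}\mid \tau\colon S\to G\text{ a relational morphism into a finite group }G\,\bigr\},
\]
that is, exactly the set of elements that relate to $1$ under every such $\tau$.

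With these two descriptions in hand the equivalence splits into two directions. For the implication $S\in\mathbf{A}\malc\mathbf{G}\Rightarrow\mathsf{K}_{\mathbf G}(S)\in\mathbf{A}$ I would argue directly: if $\tau$ witnesses membership, then $\mathsf{K}_{\mathbf G}(S)\subseteq 1\tau^{-1}\in\mathbf{A}$, and since $\mathbf{A}$ is closed under taking subsemigroups we conclude $\mathsf{K}_{\mathbf G}(S)\in\mathbf{A}$. For the converse I would use that the intersection defining $\mathsf{K}_{\mathbf G}(S)$ is in fact \emph{realized} by a single relational morphism: because $S$ is finite, there exists a relational morphism $\tau_0\colon S\to G_0$ into a finite group with $1\tau_0^{-1}=\mathsf{K}_{\mathbf G}(S)$. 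Hence if $\mathsf{K}_{\mathbf G}(S)\in\mathbf{A}$, this $\tau_0$ itself witnesses $S\in\mathbf{A}\malc\mathbf{G}$. Combining the established equivalence with $\mathbf{A}*\mathbf{G}=\mathbf{A}\malc\mathbf{G}$ then yields the proposition.

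The only point that is more than formal bookkeeping is the existence of the realizing morphism $\tau_0$ with $1\tau_0^{-1}=\mathsf{K}_{\mathbf G}(S)$, which is where the finiteness of $S$ (and the underlying compactness of the profinite setting) is genuinely used; everything else reduces to the definition of $\malc$, closure of $\mathbf{A}$ under division, and the cited identity. Consequently the main external obstacle is not in this corollary at all but in the two results it invokes, above all the nontrivial identity $\mathbf{A}*\mathbf{G}=\mathbf{A}\malc\mathbf{G}$, which I would take as given from \cite[Theorem 4.8.4]{qtheory}.
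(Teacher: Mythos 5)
Your proof is correct and takes essentially the same route as the paper: the paper likewise obtains the proposition by combining the observation that, by the definition of the Ma\softl cev product, $S\in\mathbf{A}\malc\mathbf{G}$ if and only if $\mathsf{K}_{\mathbf G}(S)\in\mathbf{A}$, with the identity $\mathbf{A}*\mathbf{G}=\mathbf{A}\malc\mathbf{G}$ of \cite[Theorem 4.8.4]{qtheory}; you have merely spelled out the ``by definition'' step, including the standard finiteness argument producing a single relational morphism $\tau_0$ with $1\tau_0^{-1}=\mathsf{K}_{\mathbf G}(S)$. One small correction: the intersection description of $\mathsf{K}_{\mathbf G}(S)$ is its \emph{definition} (as stated in the paper), not the content of Ash's theorem, which instead characterizes $\mathsf{K}_{\mathbf G}(S)$ as the smallest subsemigroup containing all idempotents and closed under weak conjugation.
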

\section{The Brauer semigroup $\mathfrak{B}_n$}\label{Brauer}
It should be mentioned that the full partition semigroup $\mathfrak{C}_n$ has complexity $n-1$ for each $n$. Indeed, it has $n-1$ essential $\mathrsfs{J}$-classes hence $c(\mathfrak{C}_n)$ can be at most $n-1$. On the other hand, the full transformation semigroup $\mathfrak{T}_n$ on $n$ letters embeds into $\mathfrak{C}_n$ and it is a classical result \cite[Theorem 4.12.31]{qtheory} that $c(\mathfrak{T}_n)=n-1$. So $c(\mathfrak{C}_n)$ has to be at least $n-1$. Of course, the Jones semigroup $\mathfrak{J}_n$ has complexity $0$ for each $n$.

Let us next consider the Brauer semigroup $\mathfrak{B}_n$. Note that $\mathfrak{B}_{2n}$ as well as $\mathfrak{B}_{2n+1}$ have $n$ essential $\mathrsfs{J}$-classes. For each pair $i<j$ with $i,j\in [n]$ define the diagram $\gamma_{ij}$ as follows:
\begin{equation}\label{atoms}
\gamma_{ij}:=\{\{i,j\},\{i',j'\},\{k,k'\}\mid k\ne i,j\}.
\end{equation}
Each $\gamma_{ij}$ is a projection of rank $n-2$. Proposition 2 in \cite{malmaz} tells us that the singular part of $\mathfrak{B}_n$ is generated by the projections $\gamma_{ij}$:
\begin{equation}\label{singularBn}\Sing \mathfrak{B}_n=\left<\gamma_{ij}\mid 1\le i<j\le n\right>.\end{equation}
Recall that the group of units of $\mathfrak{B}_n$ is the symmetric group on $n$ letters, denoted $\mathfrak{S}_n$. Another result to be essential is that $\mathfrak{B}_n$ is generated by its group of units together with $\gamma_{12}$ --- see the first paragraph of Section 3 in \cite{kudmaz} (in fact, for every $i<j$, $\gamma_{ij}$ can be used here instead of $\gamma_{12}$):
\begin{equation}\label{GegeneratedBn}\mathfrak{B}_n=\left<\mathfrak{S}_n,\gamma_{12}\right>.\end{equation}

Then $\mathfrak{B}_n\gamma_{12}\mathfrak{B}_n=\Sing \mathfrak{B}_n$ holds, and so we have $\mathfrak{B}_n\gamma_{12}\mathfrak{B}_n\subseteq \left<E(\mathfrak{B}_n)\right>$ by (\ref{singularBn}). Therefore, since $\mathfrak{B}_{n-2}\cong \gamma_{12}\mathfrak{B}_n\gamma_{12}$,
 Proposition \ref{principal proposition} implies:
\begin{proposition} The equality $c(\mathfrak{B}_n)=c(\mathfrak{B}_{n-2})+1$ holds for each $n\ge 3$.
\end{proposition}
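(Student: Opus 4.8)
The plan is to apply Proposition \ref{principal proposition} to the monoid $S=\mathfrak{B}_n$ with group of units $G=\mathfrak{S}_n$ and distinguished idempotent $e=\gamma_{12}$. To do this I need to verify, in order, the three hypotheses of that proposition: that $\mathfrak{S}_n$ is non-trivial (which holds for $n\ge 2$, so certainly for $n\ge 3$), that $\mathfrak{B}_n=\langle\mathfrak{S}_n,\gamma_{12}\rangle$, and that $\mathfrak{B}_n\gamma_{12}\mathfrak{B}_n\subseteq\langle E(\mathfrak{B}_n)\rangle$. The generation statement is exactly \eqref{GegeneratedBn}, and the containment in $\langle E(\mathfrak{B}_n)\rangle$ follows once I establish that $\mathfrak{B}_n\gamma_{12}\mathfrak{B}_n=\Sing\mathfrak{B}_n$, since then \eqref{singularBn} expresses this two-sided ideal as generated by the projections $\gamma_{ij}$, which are idempotents.

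First I would pin down the identity $\mathfrak{B}_n\gamma_{12}\mathfrak{B}_n=\Sing\mathfrak{B}_n$. The inclusion $\subseteq$ is immediate: $\gamma_{12}$ has rank $n-2<n$, and rank is a $\mathrsfs{J}$-invariant (indeed a $\Dc$-invariant) that cannot increase under multiplication, so every element of the principal ideal lies in the singular part. For $\supseteq$, the key observation is that $\Sing\mathfrak{B}_n$ is generated by the projections $\gamma_{ij}$ by \eqref{singularBn}, and each $\gamma_{ij}$ is $\Dc$-equivalent to $\gamma_{12}$ because they share the rank $n-2$; concretely one can write $\gamma_{ij}=\sigma\gamma_{12}\tau$ for suitable permutations $\sigma,\tau\in\mathfrak{S}_n$ that carry the pair $\{1,2\}$ to $\{i,j\}$. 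Hence every generator $\gamma_{ij}$ of $\Sing\mathfrak{B}_n$ already lies in $\mathfrak{B}_n\gamma_{12}\mathfrak{B}_n$, and since the latter is an ideal it contains all products of such generators, giving $\Sing\mathfrak{B}_n\subseteq\mathfrak{B}_n\gamma_{12}\mathfrak{B}_n$.

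Next I would identify the local monoid $\gamma_{12}\mathfrak{B}_n\gamma_{12}$ with $\mathfrak{B}_{n-2}$. The idea is that $\gamma_{12}$ is the projection whose through strings are exactly $\{k,k'\}$ for $k\in\{3,\dots,n\}$, with $\{1,2\}$ and $\{1',2'\}$ forming an inner and an outer pair; conjugating by $\gamma_{12}$ traps all the action on the coordinates $3,\dots,n$, which carry a faithful copy of the Brauer semigroup on $n-2$ points. I would exhibit the isomorphism $\mathfrak{B}_{n-2}\cong\gamma_{12}\mathfrak{B}_n\gamma_{12}$ by sending a diagram on $\{3,\dots,n\}$ to its evident extension fixing the $\{1,2\}$ and $\{1',2'\}$ blocks, and check that it respects multiplication (this is the routine bookkeeping the paper already asserts). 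With all hypotheses in place, Proposition \ref{principal proposition} yields $c(\mathfrak{B}_n)=c(\gamma_{12}\mathfrak{B}_n\gamma_{12})+1=c(\mathfrak{B}_{n-2})+1$.

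The main obstacle, such as it is, will be the clean verification that $\mathfrak{B}_n\gamma_{12}\mathfrak{B}_n$ is precisely $\Sing\mathfrak{B}_n$ rather than some smaller ideal; everything else is either quoted verbatim from the cited results (\eqref{singularBn}, \eqref{GegeneratedBn}) or is the standard local-monoid identification. In fact, given how much is already assembled in the excerpt, the proof reduces to stringing these facts together, and the genuine content has effectively been front-loaded into the two displayed equations and the isomorphism $\mathfrak{B}_{n-2}\cong\gamma_{12}\mathfrak{B}_n\gamma_{12}$.
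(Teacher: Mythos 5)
Your proposal is correct and follows exactly the paper's route: the paper likewise applies Proposition \ref{principal proposition} with $S=\mathfrak{B}_n$, $G=\mathfrak{S}_n$, $e=\gamma_{12}$, citing \eqref{GegeneratedBn} for generation, the identity $\mathfrak{B}_n\gamma_{12}\mathfrak{B}_n=\Sing\mathfrak{B}_n$ together with \eqref{singularBn} for the idempotent containment, and the isomorphism $\mathfrak{B}_{n-2}\cong\gamma_{12}\mathfrak{B}_n\gamma_{12}$. The only difference is that you spell out the verifications (the two inclusions via rank and conjugation by permutations, and the local-monoid identification) that the paper asserts without proof.
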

Taking into account that $c(\mathfrak{B}_1)=0$ and $c(\mathfrak{B}_2)=1$ we obtain already the main result of this section:
\begin{theorem}\label{2n=2n+1} The equality $c(\mathfrak{B}_{2n})=c(\mathfrak{B}_{2n+1})=n$ holds for each positive integer $n$.
\end{theorem}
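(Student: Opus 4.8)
The plan is to obtain the theorem by a routine induction from the recurrence $c(\mathfrak{B}_n)=c(\mathfrak{B}_{n-2})+1$ of the preceding Proposition (itself a consequence of Proposition \ref{principal proposition}), together with the two base values $c(\mathfrak{B}_1)=0$ and $c(\mathfrak{B}_2)=1$. Since the recurrence steps down by $2$ in the degree, the even and odd cases decouple cleanly: the even chain $2n\to 2n-2\to\cdots\to 2$ bottoms out at $\mathfrak{B}_2$, while the odd chain $2n+1\to 2n-1\to\cdots\to 1$ bottoms out at $\mathfrak{B}_1$.

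First I would settle the two base cases. The semigroup $\mathfrak{B}_1$ consists only of the identity diagram $\{\{1,1'\}\}$, hence is trivial, a fortiori aperiodic, so $c(\mathfrak{B}_1)=0$. For $\mathfrak{B}_2$ the group of units is $\mathfrak{S}_2\cong\mathbb{Z}_2$, which is non-trivial, so the top ($\mathrsfs{J}$-maximal) class is essential and $c(\mathfrak{B}_2)\ge 1$; the only other $\mathrsfs{J}$-class is that of rank $0$, a single idempotent with trivial group $\mathrsfs{H}$-class. Thus $\mathfrak{B}_2$ has exactly one essential $\mathrsfs{J}$-class, its depth is $1$, and since complexity never exceeds depth we get $c(\mathfrak{B}_2)\le 1$, whence $c(\mathfrak{B}_2)=1$.

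With the base cases in place, iterating the recurrence immediately yields
\begin{equation*}
c(\mathfrak{B}_{2n})=c(\mathfrak{B}_2)+(n-1)=n
\qquad\text{and}\qquad
c(\mathfrak{B}_{2n+1})=c(\mathfrak{B}_1)+n=n.
\end{equation*}
All the substance lies in the recurrence, so I do not expect a genuine obstacle here; the only point meriting a moment's care is the upper bound $c(\mathfrak{B}_2)\le 1$, which rests on the general fact that the complexity of a semigroup is bounded by its essential $\mathrsfs{J}$-depth.
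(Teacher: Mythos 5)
Your proof is correct and matches the paper's own argument: the paper likewise derives the theorem by iterating the recurrence $c(\mathfrak{B}_n)=c(\mathfrak{B}_{n-2})+1$ from the preceding Proposition, anchored at $c(\mathfrak{B}_1)=0$ and $c(\mathfrak{B}_2)=1$. Your justification of the two base cases (triviality of $\mathfrak{B}_1$, and the single essential $\mathrsfs{J}$-class plus the depth bound for $\mathfrak{B}_2$) is exactly the intended, if unstated, reasoning.
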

For the partial analogue $P\mathfrak{B}_n$ let $n\ge 2$ and denote by $P\mathfrak{B}_n^{(n-2)}$ the ideal of $P\mathfrak{B}_n$ consisting of all elements of rank at most $n-2$. The Rees quotient $P\mathfrak{B}_n/P\mathfrak{B}_n^{(n-2)}$ is an inverse semigroup and therefore has complexity $1$ (see, for example, \cite[Cor.~4.1.8]{qtheory}).   Since $P\mathfrak{B}_n^{(n-2)}=P\mathfrak{B}_n\gamma_{12}P\mathfrak{B}_n$ and $\gamma_{12}P\mathfrak{B}_n\gamma_{12}\cong P\mathfrak{B}_{n-2}$ and by use of the Ideal Theorem (Lemma \ref{ideal lemma}) and Lemma \ref{submonoids} it follows that $c(P\mathfrak{B}_n)\le c(P\mathfrak{B}_{n-2})+1$ holds for each $n\ge 3$. In other words, the transition from $P\mathfrak{B}_{n-2}$ to $P\mathfrak{B}_n$ increments the complexity  by at most $1$. Since $c(P\mathfrak{B}_1)=0$ and $c(P\mathfrak{B}_2)=1$ (the former is aperiodic, the latter has only one essential $\mathrsfs{J}$-class) it follows by induction that $c(P\mathfrak{B}_{2n})\le n$ and $c(P\mathfrak{B}_{2n+1})\le n$ for all $n$. On the other hand, since $c(P\mathfrak{B}_n)\ge c(\mathfrak{B}_n)$ the reverse inequalities also hold by Theorem \ref{2n=2n+1}. Altogether we have proved:
\begin{corollary} The equality $c(P\mathfrak{B}_n)=c(\mathfrak{B}_n)=\lfloor\frac{n}{2}\rfloor$ holds for each positive integer $n$.
\end{corollary}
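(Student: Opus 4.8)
The plan is to prove the two inequalities $c(P\mathfrak{B}_n)\ge \lfloor n/2\rfloor$ and $c(P\mathfrak{B}_n)\le \lfloor n/2\rfloor$ separately. The lower bound is immediate: since $\mathfrak{B}_n$ is a subsemigroup of $P\mathfrak{B}_n$ and Krohn--Rhodes complexity is monotone with respect to division, Theorem~\ref{2n=2n+1} gives $c(P\mathfrak{B}_n)\ge c(\mathfrak{B}_n)=\lfloor n/2\rfloor$. So the work lies entirely in the matching upper bound, which I would establish by an induction on $n$ that decreases the degree by two at a time, mirroring the recursion $c(\mathfrak{B}_n)=c(\mathfrak{B}_{n-2})+1$ already obtained for the total semigroup.

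For the inductive step I would peel off the top of the $\mathrsfs{J}$-order. Let $I$ be the ideal of $P\mathfrak{B}_n$ consisting of all elements of rank at most $n-2$. The Rees quotient $P\mathfrak{B}_n/I$ then retains only the elements of rank $n$ (the group of units $\mathfrak{S}_n$) and rank $n-1$, and I expect this quotient to be an inverse semigroup; since every inverse semigroup has complexity at most $1$ (see \cite[Cor.~4.1.8]{qtheory}), one gets $c(P\mathfrak{B}_n/I)\le 1$. Applying the Ideal Theorem (Lemma~\ref{ideal lemma}) yields $c(P\mathfrak{B}_n)\le c(I)+1$. To compute $c(I)$ I would identify $I$ as the principal ideal $P\mathfrak{B}_n\gamma_{12}P\mathfrak{B}_n$ generated by the rank-$(n-2)$ projection $\gamma_{12}$; Lemma~\ref{submonoids} then reduces $c(I)=c(P\mathfrak{B}_n\gamma_{12}P\mathfrak{B}_n)$ to $c(\gamma_{12}P\mathfrak{B}_n\gamma_{12})$. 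Recognizing the local monoid $\gamma_{12}P\mathfrak{B}_n\gamma_{12}$ as a copy of $P\mathfrak{B}_{n-2}$ completes the step and yields the recursion $c(P\mathfrak{B}_n)\le c(P\mathfrak{B}_{n-2})+1$ for $n\ge 3$.

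With the recursion in hand, the proof concludes by checking the two base cases $c(P\mathfrak{B}_1)=0$ (this semigroup is aperiodic) and $c(P\mathfrak{B}_2)=1$ (it has a single essential $\mathrsfs{J}$-class), and then unwinding the induction to obtain $c(P\mathfrak{B}_{2n})\le n$ and $c(P\mathfrak{B}_{2n+1})\le n$, that is, $c(P\mathfrak{B}_n)\le\lfloor n/2\rfloor$. Combined with the lower bound this gives the claimed equality, and together with Theorem~\ref{2n=2n+1} the full chain $c(P\mathfrak{B}_n)=c(\mathfrak{B}_n)=\lfloor n/2\rfloor$.

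The main obstacle I anticipate is the structural bookkeeping feeding the inductive step rather than any complexity-theoretic subtlety. Specifically, I would need to verify carefully that (i) $P\mathfrak{B}_n/I$ really is inverse, which amounts to checking that the rank-$(n-1)$ and rank-$n$ elements form a regular semigroup with commuting idempotents once everything below is collapsed to zero; (ii) the ideal of elements of rank $\le n-2$ coincides exactly with $P\mathfrak{B}_n\gamma_{12}P\mathfrak{B}_n$, so that Lemma~\ref{submonoids} is applicable; and (iii) the local monoid $\gamma_{12}P\mathfrak{B}_n\gamma_{12}$ is isomorphic to $P\mathfrak{B}_{n-2}$, i.e.\ that sandwiching by the projection $\gamma_{12}$ simply deletes the points $1,2$ together with their primed copies. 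Each of these is a diagram-level computation of the kind afforded by the known generators of $\Sing\mathfrak{B}_n$ and the description of the partial Brauer monoid, but it is precisely where the care is required.
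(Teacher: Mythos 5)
Your proposal follows essentially the same route as the paper: the lower bound via the embedding $\mathfrak{B}_n\hookrightarrow P\mathfrak{B}_n$ and Theorem~\ref{2n=2n+1}, and the upper bound via the Rees quotient by the rank-$(n-2)$ ideal (an inverse semigroup, hence complexity at most $1$), the Ideal Theorem, the identification $P\mathfrak{B}_n^{(n-2)}=P\mathfrak{B}_n\gamma_{12}P\mathfrak{B}_n$ with Lemma~\ref{submonoids}, the isomorphism $\gamma_{12}P\mathfrak{B}_n\gamma_{12}\cong P\mathfrak{B}_{n-2}$, and the base cases $c(P\mathfrak{B}_1)=0$, $c(P\mathfrak{B}_2)=1$. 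The structural facts you flag as needing verification are exactly the ones the paper asserts without further detail, so your argument is correct and matches the paper's proof step for step.
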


\section{The annular semigroup $\mathfrak{A}_n$}\label{annular}
\subsection{Even degree}  This seems to be the most interesting case. It is not possible to apply Proposition \ref{principal proposition} here because $\Sing \mathfrak{A}_n$ is not idempotent generated (nor contained in the type II subsemigroup). We shall not calculate the complexity of $\mathfrak{A}_n$ directly but rather study a certain natural subsemigroup --- the \emph{even annular semigroup} $\mathfrak{EA}_n$  ---, calculate the complexity of the latter and then show that $\mathfrak{A}_n$ has no bigger complexity than $\mathfrak{EA}_n$.

Throughout this subsection let $n$ be even.
Let $\alpha\in \mathfrak{A}_n$ be of rank $r$ and let $a_1<a_2<\cdots<a_r$ and $b'_1<b'_2<\cdots<b'_r$ be the elements of $\dom \alpha$ and $\ran\alpha$, respectively. Then the numbers $a_i$  are alternately even and odd, and likewise are the numbers $b_i$. This is because the nodes strictly between $a_i$ and $a_{i+1}$ as well as strictly between $b'_i$ and $b'_{i+1}$ are entirely involved in inner strings respectively outer strings and hence an even number of nodes must be between $a_i$ and $a_{i+1}$ respectively $b'_i$ and $b'_{i+1}$. A through string $\{i,j'\}$ of $\alpha$ is \emph{even} if $i-j$ is even, and otherwise it is \emph{odd}. Suppose that $\{a_1,b'_{s+1}\}$ is a through string of $\alpha$. Then, by the definition of $\mathfrak{A}_n$, the other through strings of $\alpha$ are exactly the strings $\{a_i,b'_{s+i}\}$ (where the sum $s+i$ has to be taken $\bmod\, r$). It follows that either all through strings of $\alpha$ are even or all are odd. Define the element $\alpha$ to be \emph{even} if every through string of $\alpha$ is even (or equivalently, if $\alpha$ has no odd through string) --- note that the even members of $\mathfrak{A}_n$ coincide with the \emph{oriented diagrams} in \cite{jonesannular}. All diagrams of rank $0$ are even, by definition.  Let $\alpha,\beta\in \mathfrak{A}_n$ and suppose that
 $\mathbf{s}=\underset{k}{\bullet}\!\!\!{\rrbb}\!\!\!\underset{l'}{\bullet}$ is a through string in $\alpha\beta$. By definition of the product in $\mathfrak{A}_n$ there exist a unique number $s\ge 1$ and pairwise distinct  $u_1,v_1,u_2,\dots,v_{s-1},u_s\in [n]$ such that $\mathbf s$ is obtained as the concatenation of the strings
 \begin{equation*}\label{*}
 \underset{k}{\bullet}\!\!\!{\rrbb}\!\!\!\underset{u'_1}{\bullet}\ \underset{u_1}{\bullet}\!\!\!{\rrbb}\!\!\!\underset{v_1}{\bullet}\ \underset{v'_1}{\bullet}\!\!\!{\rrbb}\!\!\!\underset{u'_2}{\bullet}\cdots \underset{u_{s-1}}{\bullet}\!\!\!{\!\!\!\rrbb\!\!}\!\!\!\underset{v_{s-1}}{\bullet}\ \underset{v'_{s-1}}{\bullet}\!\!\!{\!\rrbb\!\!}\!\!\!\underset{u'_s}{\bullet}\ \underset{u_s}{\bullet}\!\!\!{\rrbb}\!\!\!\underset{l'}{\bullet}
 \end{equation*}
 where
 $\mathbf{u}:=\underset{k}{\bullet}\!\!\!{\rrbb}\!\!\!\underset{u'_1}{\bullet}$ is a  through string of $\alpha$, $\mathbf{v}:=\underset{u_s}{\bullet}\!\!\!{\rrbb}\!\!\!\underset{l'}{\bullet}$ is a  through string of $\beta$, all $\underset{u_i}{\bullet}\!\!\!{\rrbb}\!\!\!\underset{v_i}{\bullet}$ are  inner strings of $\beta$ and all $\underset{v'_{i}}{\bullet}\!\!\!{\rrbb}\!\!\!\!\underset{u'_{i+1}}{\bullet}$ are outer strings of $\alpha$. It is easy to see that for each outer string $\{i,j\}$ and each inner string $\{g',h'\}$ of any element $\gamma$ of $\mathfrak{A}_n$ the inequalities $i\not\equiv j\bmod 2$ and $g\not\equiv h\bmod 2$ hold. It follows that $u_i\not\equiv v_i\not\equiv u_{i+1} \bmod 2$ and therefore $u_i\equiv u_{i+1} \bmod 2$ for all $i$ whence $u_1\equiv u_s\bmod 2$. Consequently, $\mathbf{s}$ is even if and only if $\mathbf u$ and $\mathbf v$ are both even or both odd while $\mathbf s$ is odd if and only if exactly one of $\mathbf u$ and $\mathbf v$ is even. In particular, the set $\mathfrak{EA}_n$ of all even members of $\mathfrak{A}_n$ forms a submonoid of $\mathfrak{A}_n$. Moreover, since each projection is even, each idempotent (being the product of two projections) is also even so that $\mathfrak{EA}_n$ contains all idempotents of $\mathfrak{A}_n$. A direct inspection shows that each planar diagram $\alpha\in \mathfrak{J}_n$ is also even, whence $\mathfrak{J}_n$ is a submonoid of $\mathfrak{EA}_n$.

 Similarly as in  $\mathfrak{A}_n$ and $\mathfrak{J}_n$, Green's $\mathrsfs{J}$-relation in $\mathfrak{EA}_n$ is characterized by the rank: two diagrams of $\mathfrak{EA}_n$ are $\mathrsfs{J}$-related if and only if they have the same rank.
The argument is as follows: let $\ep$ and $\eta$ be arbitrary projections of rank
$t$ with $a_1<a_2<\cdots<a_t$  the domain of $\ep$ and $b'_1<b'_2<\cdots<b'_t$ the range of $\eta$; define $\gamma$ to
be the element having the same inner strings as $\ep$, the same outer strings as $\eta$ and the through strings
$\{a_1,b'_1\},\dots,\{a_t,b'_t\}$ in case $a_1\equiv b_1\bmod 2$ while in case $a_1\not\equiv b_1\bmod 2$ the through strings of $\gamma$ can be chosen to be $\{a_1,b'_2\},\{a_2,b'_3\}\dots,\{a_t,b'_1\}$. Then $\gamma\in \mathfrak{EA}_n$, $\ep=\gamma\gamma^*$ and $\eta=\gamma^*\gamma$.

As far as the group of units $\mathfrak{EA}_n^\times$ of $\mathfrak{EA}_n$ is concerned, we see that the diagram
\begin{equation}\label{zeta}
\zeta=\{\{1,2'\},\{2,3'\}\},\dots\{n,1'\}\}\end{equation} is odd and so definitely does not belong to $\mathfrak{EA}_n$. On the other hand,
$$\zeta^2=\{\{1,3'\},\{2,4'\},\dots ,\{n-1,1'\},\{n,2'\}\}$$ is even whence the group of units $\mathfrak{EA}_n^\times$ is cyclic of order $\frac{n}{2}$.
More generally, for each even, positive $r<n$ the maximal subgroups of the $\mathrsfs{J}$-class of all rank-$r$-elements of $\mathfrak{EA}_n$ is cyclic of order $\frac{r}{2}$.

We are going to define two actions $\mathsf{S}$ and $\mathsf{T}$ of $\mathbb{Z}$ on $\mathfrak{A}_n$. The action of $\mathsf{S}$ is by automorphisms, but that of $\mathsf{T}$ is by translations. For $k\in \mathbb{Z}$ let  $\mathsf{S}_k,\mathsf{T}_k:\mathfrak{A}_n\to \mathfrak{A}_n$ be defined as follows: $\alpha\mathsf{S}_k$ is the diagram obtained from $\alpha$ by replacing each string $\{i,j\}$ [respectively $\{i,j'\}$, $\{i',j'\}$] by $\{i+k,j+k\}$ [respectively $\{i+k,(j+k)'\}$, $\{(i+k)',(j+k)'\}$]; $\alpha\mathsf{T}_k$ is obtained from $\alpha$ by replacing each string $\{i,j'\}$ [respectively $\{i',j'\}$] by $\{i,(j+k)'\}$ [respectively $\{(i+k)',(j+k)'\}$]. The addition $+$ has to be taken $\bmod\ n$, of course. We call $\mathsf{S}_k$ the \emph{shift by $k$} and $\mathsf{T}_k$ the \emph{(outer) twist by $k$}. Note that an outer twist leaves unchanged all inner strings. We could similarly define the inner twist by $k$ but we will not need it.

Shifts and twists can be expressed in terms of the unit element $\zeta$ defined in (\ref{zeta}). Namely, for each $\alpha\in \mathfrak{A}_n$ and each $k\in  \mathbb{Z}$ the following hold:
\begin{equation} \label{shifts and twists}
\alpha\mathsf{S}_k=\zeta^{-k}\alpha\zeta^k\mbox{ and }\alpha\mathsf{T}_k=\alpha\zeta^k.\end{equation}
For later use we note that $\alpha\mathsf{S}_k$ is even for every $k$ if and only of $\alpha$ is itself even, and, for every even $k$, $\alpha\mathsf{T}_k$ is even if and only if $\alpha$ itself is even.

In the following we shall show that the singular part of $\mathfrak{EA}_n$ is idempotent generated. In order to simplify notation, we set, for each $i\in [n]$, $\gamma_i:=\gamma_{i,i+1}$, that is, $\gamma_i$ denotes the projection
$$\gamma_i=\{\{i,i+1\},\{i',(i+1)'\}, \{k,k'\}\mid k\ne i,i+1\}.$$
(Addition  has to be taken $\bmod\ n$.) More precisely, we intend to show that $\Sing \mathfrak{EA}_n$ is generated by the projections $\gamma_1,\dots,\gamma_n$. Set $\mathfrak{S}:=\left<\gamma_1,\dots,\gamma_n\right>$; then obviously $\mathfrak{S}\subseteq \Sing \mathfrak{EA}_n$ and $\mathfrak{S}$ is closed under $^*$. Moreover, $\mathfrak{S}$ is closed under  $\mathsf{S}_{\pm 1}$ and therefore closed under $\mathsf{S}_k$ for all $k\in \mathbb{Z}$. This is immediate from the fact that the \textbf{set} $\{\gamma_1,\dots,\gamma_n\}$ is closed under $\mathsf{S}_{\pm 1}$ and that each $\mathsf{S}_k$ is an automorphism. Next, we note that
\begin{equation*}\gamma_{n-1}\cdots\gamma_1=\{\{n-1,n\},\{1',2'\},\{k,(k+2)'\}\mid k=1,\dots n-2\}=:\lambda \end{equation*}
(see Figure \ref{lambda}).

\begin{figure}[ht]
\centering
\begin{tikzpicture}
[scale=0.5]
\foreach \x in {0,3,6,9} \foreach \y in {0,1,2,3,5,6} \filldraw (\x,\y) circle (4pt);
\foreach \x in {12,15,18} \foreach \y in {0,1,3,4,5,6} \filldraw (\x,\y) circle (4pt);
\foreach \x in {0,3,6,9} \draw (\x,4.25) node {$\vdots$};
\foreach \x in {12,15,18} \draw(\x,2.25) node {$\vdots$};
\foreach \y in {0,1,2,3,4,5,6} \draw (10.6,\y) node {$\dots$};
\draw (0,6) -- (9,6); \draw (12,6) -- (15,6);
\draw (0,5) -- (9,5);
\draw (15,4) -- (18,4);
\draw (0,3) -- (6,3); \draw (12,3) -- (18,3);
\draw (0,2) -- (3,2);
\draw (6,1) -- (9,1); \draw (12,1) -- (18,1);
\draw (3,0) -- (9,0); \draw (12,0) -- (18,0);
\draw (19.5,3) node {$=$};
\foreach \y in {0,1,2,3,5,6} \filldraw (21,\y) circle (4pt);
\foreach \y in {0,1,3,4,5,6} \filldraw (24,\y) circle (4pt);
\draw (21,4.25) node {$\vdots$};
\draw (24,2.25) node {$\vdots$};
\draw (21,6) -- (24,4);
\draw (21,5) -- (24,3);
\draw (21,3) -- (24,1);
\draw (21,2) -- (24,0);
\draw (15,6) .. controls (15.5,5.5) and (15.5,5.5) .. (15,5);
\draw (18,6) .. controls (17.5,5.5) and (17.5,5.5) .. (18,5);
\draw (24,6) .. controls (23.5,5.5) and (23.5,5.5) .. (24,5);
\draw (12,5) .. controls (12.5,4.5) and (12.5,4.5) .. (12,4);
\draw (15,5) .. controls (14.5,4.5) and (14.5,4.5) .. (15,4);
\draw (6,3) .. controls (6.5,2.5) and (6.5,2.5) .. (6,2);
\draw (9,3) .. controls (8.5,2.5) and (8.5,2.5) .. (9,2);
\draw (3,2) .. controls (3.5,1.5) and (3.5,1.5) .. (3,1);
\draw (6,2) .. controls (5.5,1.5) and (5.5,1.5) .. (6,1);
\draw (0,1) .. controls (0.5,0.5) and (0.5,0.5) .. (0,0);
\draw (3,1) .. controls (2.5,0.5) and (2.5,0.5) .. (3,0);
\draw (21,1) .. controls (21.5,0.5) and (21.5,0.5) .. (21,0);
\draw (1.5,-1) node {$\gamma_{n-1}$}; \draw (4.5,-1) node {$\gamma_{n-2}$};
\draw (7.5,-1) node {$\gamma_{n-3}$};
\draw (13.5,-1) node {$\gamma_{2}$};
\draw (16.5,-1) node {$\gamma_{1}$};
\draw (22.5,-1) node {$\lambda$};
\draw[densely dotted] (12,4) .. controls (11.5,3.5) and (11.5,3.5) .. (12,3);
\end{tikzpicture}

\caption {The element $\lambda$.} \label{lambda}
\end{figure}
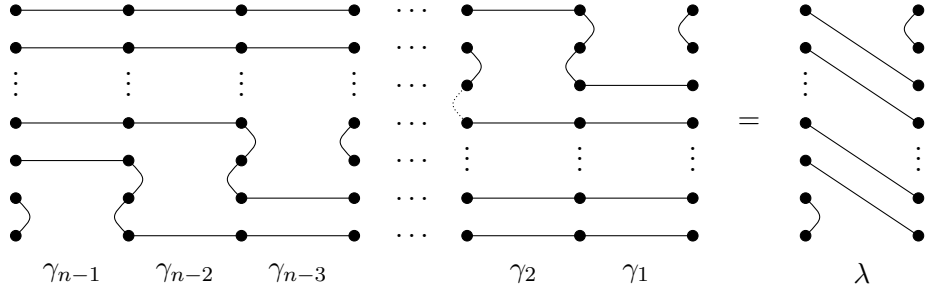\noindent
The element $\lambda$ clearly belongs to $\mathfrak{S}$, and therefore so does each shifted version $\lambda\mathsf{S}_k$ of $\lambda$. Let $\alpha$ be a singular element of $\mathfrak{A}_n$ containing the outer string $\{(n-1)',n'\}$. Then a direct calculation shows that $$\alpha\lambda = \alpha\mathsf{T}_2.$$ More generally, if  $\alpha$ is an arbitrary singular element of $\mathfrak{A}_n$ then there exists $i\in [n]$ such that the outer string $\{(i-1)',i'\}$ belongs to $\alpha$. A similar  calculation then shows that
$$\alpha\cdot\lambda\mathsf{S}_{-(n-i)}=\alpha\mathsf{T}_2.$$
As a consequence, $\mathfrak{S}\mathsf{T}_2\subseteq \mathfrak{S}$, and, since $\mathsf{T}_{-2}=\mathsf{T}_{n-2}$ we infer that  $\mathfrak{S}$ is closed under  twists $\mathsf{T}_k$ for all even $k$. We are ready for a proof of the aforementioned result concerning $\Sing \mathfrak{EA}_n$ and refer to Lemma 2.8 of \cite{jonesannular} for an analogous result in the context of annular algebras. We shall crucially use:
\begin{lemma}[\cite{dosen}, Lemma 2] \label{lemmadosen} $\Sing \mathfrak{J}_n=\left<\gamma_1,\dots,\gamma_{n-1}\right>$.
\end{lemma}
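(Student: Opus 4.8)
The lemma is quoted from \cite{dosen}; the plan I would follow to prove it directly is as follows. Since $\J_n$ is aperiodic its group of units is trivial, so $\Sing\J_n$ is precisely the set of planar diagrams of rank at most $n-2$ (recall that $n-\rk\alpha$ is always even); thus the task is to show that every planar diagram $\alpha$ with $\rk\alpha<n$ is a product of the generators $\gamma_1,\dots,\gamma_{n-1}$. One inclusion is immediate: each $\gamma_i$ is a planar projection of rank $n-2$, and the rank is non-increasing under multiplication, so $\langle\gamma_1,\dots,\gamma_{n-1}\rangle\subseteq\Sing\J_n$.

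The reverse inclusion will rest on two structural facts about planar diagrams. First, the through strings of a planar $\alpha$ are forced: the non-crossing condition means that the $i$-th smallest element of $\dom\alpha$ must be joined to the $i$-th smallest element of $\ran\alpha$, so $\alpha$ is completely determined by its inner and its outer strings, each of which is a non-crossing matching of a subset of $[n]$ (respectively of $[n]'$). Second (the innermost-pair lemma), any non-crossing matching of a linearly ordered set contains a pair of neighbours $\{i,i+1\}$; hence whenever $\rk\alpha<n$ the diagram $\alpha$ possesses at least one adjacent inner string and at least one adjacent outer string. Together these reduce the whole problem to realizing arbitrary non-crossing matchings as products of the ``elementary caps'' $\gamma_i$.

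I would carry out this realization by induction on the number $d=(n-\rk\alpha)/2$ of inner caps. The base case $d=1$ (rank $n-2$) is handled by direct computation: $\gamma_i$ itself is the diagram whose inner and outer caps both sit at $\{i,i+1\}$, and a short calculation shows that the shifted products $\gamma_{i+k}\cdots\gamma_{i+1}\gamma_i$, together with their $*$-images, produce exactly the rank-$(n-2)$ diagrams whose inner and outer caps sit at arbitrary, possibly different, positions. For the inductive step, given $\alpha$ with $d\ge 2$ I would pick an adjacent inner string $\{i,i+1\}$ and exhibit a planar diagram $\beta$ with $\rk\beta=\rk\alpha+2$ and $\alpha=\gamma_i\beta$: concretely, $\beta$ is obtained from $\alpha$ by deleting the inner cap $\{i,i+1\}$ together with a suitably chosen outer cap and re-opening both into two through strings, so that left multiplication by $\gamma_i$ re-installs the cap $\{i,i+1\}$, merges the two freed strands, and recreates the deleted outer cap. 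Since $\beta$ then has only $d-1$ caps, the induction hypothesis places $\beta$, and hence $\alpha=\gamma_i\beta$, in $\langle\gamma_1,\dots,\gamma_{n-1}\rangle$.

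The main obstacle is exactly the bookkeeping hidden in this last step: one must verify that the outer cap paired with $\{i,i+1\}$ can always be chosen so that the resulting $\beta$ is again \emph{planar} (the non-crossing condition has to survive the opening of a nested cap structure), that $\beta$ genuinely has corank $d-1$ rather than suffering a larger rank drop, and that $\gamma_i\beta$ reproduces $\alpha$ exactly. Nested (non-adjacent) caps are the delicate case, and controlling them is what forces one to peel off innermost pairs and to track carefully how multiplication by a generator merges the two strands attached at positions $i$ and $i+1$. Once this local analysis is in place the induction closes and yields $\Sing\J_n=\langle\gamma_1,\dots,\gamma_{n-1}\rangle$.
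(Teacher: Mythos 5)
The paper itself offers no proof of this lemma: it is imported verbatim from \cite{dosen} (Lemma~2), so your proposal must stand on its own. Your first inclusion, the two structural facts about planar diagrams, and the base case $d=1$ are all correct. The genuine gap is in the inductive step: the factorization $\alpha=\gamma_i\beta$ with $\rk\beta=\rk\alpha+2$ that you want to ``exhibit'' does not exist in general. Take $n=4$ and $\alpha=\{\{1,4\},\{2,3\},\{1',4'\},\{2',3'\}\}\in\mathfrak{J}_4$. Its unique adjacent inner string is $\{2,3\}$, so your step forces $i=2$. Now run through all planar diagrams $\beta$ of rank $2$: each has a single inner string $\{a,a+1\}$ with $a\in\{1,2,3\}$, and in every case the node $1$ ends up on a \emph{through} string of $\gamma_2\beta$ (if $a=2$ or $a=3$, the strand starting at $1$ passes straight through $\beta$ to the bottom; if $a=1$, the chain $1\to 2\to 3$ through the middle layer again exits at the bottom). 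Hence $\gamma_2\beta$ never contains the inner string $\{1,4\}$, so $\gamma_2\beta\neq\alpha$ for every rank-$2$ planar $\beta$. The element $\alpha$ does lie in $\left<\gamma_1,\gamma_2,\gamma_3\right>$ --- indeed $\alpha=\gamma_2\gamma_1\gamma_3\gamma_2$ --- but in \emph{every} factorization $\alpha=\gamma_2\beta$ the second factor has rank $0$, the same rank as $\alpha$. So an induction on the corank $d$ cannot close.

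The failure is exactly the nesting you flagged, but your proposed remedy (a ``suitably chosen'' outer cap to delete) attacks the wrong side of the diagram. When the adjacent inner string $\{i,i+1\}$ is nested inside another inner string --- here $\{2,3\}$ inside $\{1,4\}$ --- re-opening $\{i,i+1\}$ into two through strings forces those strings to cross the \emph{enclosing inner} cap, no matter which outer cap is removed; planarity is already violated on the inner side before the outer side is consulted. A correct argument needs a different induction parameter: for instance, induct on $n$ rather than on corank; or keep the rank fixed and induct on a nesting measure of the cap structure (note that in the example the factor $\gamma_1\gamma_3\gamma_2$ has the same rank as $\alpha$ but un-nested inner caps); or use the normal form for Temperley--Lieb diagrams as is done in \cite{dosen}. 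As it stands, your assertion that ``$\beta$ genuinely has corank $d-1$'' is not a bookkeeping verification left to the reader but a false statement.
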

\begin{proposition}\label{singularevenannular} $\Sing \mathfrak{EA}_n=\left<\gamma_1,\dots,\gamma_n\right>$.
\end{proposition}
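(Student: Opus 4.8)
### Goal

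The plan is to prove $\Sing \mathfrak{EA}_n = \mathfrak{S}$, where $\mathfrak{S} := \langle\gamma_1,\dots,\gamma_n\rangle$. The inclusion $\mathfrak{S}\subseteq \Sing\mathfrak{EA}_n$ is already noted (each $\gamma_i$ is an even projection of rank $n-2$, and a product of singular elements is singular), so the work is the reverse inclusion: every even singular diagram lies in $\mathfrak{S}$. We may freely use that $\mathfrak{S}$ is closed under $^*$, under all shifts $\mathsf{S}_k$, and — the key fact just established — under all \emph{even} twists $\mathsf{T}_k$.

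### Strategy: reduce to the planar (Jones) case

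First I would observe that an even diagram is, up to an even twist, planar. Concretely, let $\alpha\in\Sing\mathfrak{EA}_n$ have rank $r$ with domain $a_1<\cdots<a_r$ and range $b'_1<\cdots<b'_r$. Since $\alpha$ is even, each through string is $\{a_i, b'_{s+i}\}$ for some fixed offset $s$, and evenness forces $a_i \equiv b_{s+i}\bmod 2$. I claim there is an even twist $\mathsf{T}_{2t}$ carrying $\alpha$ to a \emph{planar} diagram $\beta\in\mathfrak{J}_n$: applying $\mathsf{T}_{2t}$ rotates the range nodes $b'_j \mapsto (b_j+2t)'$ while fixing all inner strings, so by choosing $2t$ to realign the through strings into a non-crossing (planar) configuration, and using that evenness is preserved under even twists, I can arrange $\beta=\alpha\mathsf{T}_{2t}\in\mathfrak{J}_n$. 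The main content here is a combinatorial check that the offset $s$ of an even diagram can always be neutralized by an even rotation of the outer boundary so as to untangle the through strings into the identity-type planar pattern; this uses that the gaps between consecutive domain (resp. range) nodes are all even, which is exactly the alternating-parity observation recorded at the start of the subsection.

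### Closing the loop via Došen's lemma

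Given such a planar $\beta$, Lemma \ref{lemmadosen} gives $\beta\in\Sing\mathfrak{J}_n=\langle\gamma_1,\dots,\gamma_{n-1}\rangle\subseteq\mathfrak{S}$. Since $\mathfrak{S}$ is closed under even twists, I recover
\[
\alpha=\beta\mathsf{T}_{-2t}\in\mathfrak{S},
\]
where $-2t$ is even so $\mathsf{T}_{-2t}$ preserves $\mathfrak{S}$. This handles every $\alpha$ of positive even rank. For the remaining case, diagrams of rank $0$ (all of which are even by definition) must also be shown to lie in $\mathfrak{S}$; these have only inner and outer strings, and I would treat them by noting that a rank-$0$ element is a product of two projections of the form $\varepsilon\varepsilon^*$-type data, each built from the $\gamma_i$ and their twists/shifts, or alternatively by absorbing them into the rank-$r$ argument as degenerate cases. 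Because $\mathfrak{S}$ is closed under $^*$, shifts, and even twists, and contains $\lambda=\gamma_{n-1}\cdots\gamma_1$, there is ample room to generate all inner/outer string patterns of even type.

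### Expected obstacle

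The hard part will be the untangling step: verifying precisely that an arbitrary \emph{even} singular annular diagram can be brought to a \emph{planar} one by an \emph{even} twist, rather than an arbitrary twist. The parity constraint is essential — an odd twist would take us outside $\mathfrak{EA}_n$ and outside $\mathfrak{S}$ — so the combinatorics must confirm that the through-string offset $s$ of an even diagram is compatible with an even rotation. I expect this to follow cleanly from the fact that all node-gaps are even and the offset $s$ respects parity, but it is the one place where the even/odd bookkeeping of the annular setting (as opposed to the purely planar Jones case) genuinely enters, and it is where I would concentrate the rigor.
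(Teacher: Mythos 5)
Your reduction hinges on the claim that an even singular diagram can be made planar by an even twist alone, and that claim is false; this is exactly where the argument breaks. A twist $\mathsf{T}_{2t}$ fixes the domain and all inner strings, but in a planar diagram no inner string can cover a domain point: a through string emanating from a covered point would have to cross that inner string. Hence any even annular diagram in which an inner string covers a domain point can never be planarized by a twist, even or odd. Such diagrams exist: take $\alpha=\{\{1,4\},\{2,2'\},\{3,1'\},\{3',4'\}\}\in\Sing\mathfrak{EA}_4$ (even, annular, of rank $2$); the inner string $\{1,4\}$ covers the domain points $2,3$, and indeed neither $\alpha$ nor $\alpha\mathsf{T}_2=\{\{1,4\},\{2,4'\},\{3,3'\},\{1',2'\}\}$ is planar. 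The diagram $\{\{1,6\},\{4,5\},\{2,2'\},\{3,1'\},\{3',4'\},\{5',6'\}\}$ plays the same role for $n=6$. Note also that producing a radial string $\{a,a'\}$ is not in itself enough: the example above contains $\{2,2'\}$ yet is not planar; the cut-the-annulus argument yields planarity only for the string $\{1,1'\}$.

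What is missing is the shift, which is the essential first move in the paper's proof. There one first applies a shift $\mathsf{S}_k$ so that the resulting diagram has a through string $\{1,j'\}$; evenness forces $j$ to be odd, so the twist $\mathsf{T}_{-(j-1)}$ is even and produces a diagram containing $\{1,1'\}$, which \emph{is} planar (cutting the annulus along that string reproduces exactly the rectangle order $1,\dots,n,n',\dots,1'$). Then Lemma \ref{lemmadosen} applies, and one recovers $\alpha\in\mathfrak{S}\mathsf{T}_{j-1}\mathsf{S}_{-k}\subseteq\mathfrak{S}$, using closure of $\mathfrak{S}$ under shifts as well as under even twists --- both of which you listed as available but only one of which your strategy uses. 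With the shift inserted, your outline becomes the paper's proof; without it, the ``untangling'' step you yourself flagged as the place to concentrate rigor cannot be carried out. (Your rank-$0$ case is also vaguer than necessary: every rank-$0$ element of $\mathfrak{A}_n$ is already planar, so Lemma \ref{lemmadosen} handles it directly, with no product-of-projections argument needed.)
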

\begin{proof} Let $\alpha\in \Sing \mathfrak{EA}_n$ and suppose first that $\alpha$ has non-zero rank, that is, $\alpha$ admits a through string. Then there exists some $k\in \mathbb{Z}$ such that $\alpha\mathsf{S}_k$ contains a through string of the form $\{1,j'\}$ for some $j$. Since $\alpha$, and therefore also $\alpha\mathsf{S}_k$ is even, $j$ must be odd.  Then $\alpha\mathsf{S}_k\mathsf{T}_{-(j-1)}$ is still even but contains the through string $\{1,1'\}$, whence $\alpha\mathsf{S}_k\mathsf{T}_{-(j-1)}$ belongs to $\Sing \mathfrak{J}_n$ and therefore, by Lemma \ref{lemmadosen}, $\alpha\mathsf{S}_k\mathsf{T}_{-(j-1)}\in \left<\gamma_1,\dots,\gamma_{n-1}\right>\subseteq \mathfrak{S}$. Consequently,
$$\alpha\in \mathfrak{S}\mathsf{T}_{j-1}\mathsf{S}_{-k}\subseteq \mathfrak{S},$$
as required. Finally, it is easy to see that each rank-zero element of $\mathfrak{A}_n$ belongs actually to $\mathfrak{J}_n$.  Consequently each rank zero element belongs to $\mathfrak{S}$, again as a consequence of Lemma \ref{lemmadosen}.
\end{proof}

For the following considerations let $n\ge 6$ and let the (planar) projection $\varepsilon$ of rank $n-4$ be defined by
\begin{equation}\label{epsilon}
\varepsilon :=\{\{2,3\},\{2',3'\}, \{n-1,n\},\{(n-1)',n'\},\{k,k'\}\mid \\ k\ne 2,3,n-1,n\}.
\end{equation}
The following subsemigroup of $\mathfrak{EA}_n$ will play a crucial role:
\begin{equation*}\label{EA'}
\mathfrak{EA}'_n:=\left<\mathfrak{EA}^\times_n,\gamma_{n-1},\gamma_n\gamma_{n-1},\varepsilon\right>.
\end{equation*}
First we notice that
\begin{equation*}\label{gigi+1gi}
\gamma_{n-1}\gamma_{n}\gamma_{n-1}=\gamma_{n-1}.
\end{equation*} Consequently,
\begin{equation}\label{Lclass-gammai}\gamma_{n-1}\mathrel{\mathrsfs{L}}
\gamma_{n}\gamma_{n-1}= (\gamma_{n}\gamma_{n-1})^2.\end{equation}
Since $\varepsilon=\varepsilon\gamma_{n-1}$  it follows that $\mathfrak{EA}'_n$ is a $\mathcal{T}_1$-semigroup. By Proposition \ref{kernelcomplexity} we obtain:
\begin{corollary}\label{complexity jump} For each even $n\ge 6$ the inequality $c(\mathsf{K}_{\mathbf{G}}(\mathfrak{EA}'_n))<c(\mathfrak{EA}'_n)$ holds.
\end{corollary}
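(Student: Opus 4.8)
The plan is to invoke Proposition \ref{kernelcomplexity} directly. That proposition applies to any $\mathcal{T}_1$-semigroup which fails to be aperiodic, so the whole task reduces to verifying these two hypotheses for $S=\mathfrak{EA}'_n$. The first of them has effectively been established in the discussion preceding the corollary, so the only genuinely new content is the non-aperiodicity.

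First I would record the $\mathcal{T}_1$ property. By definition this needs an $\mathrsfs{L}$-chain of generators, and the generating set $\{\mathfrak{EA}_n^\times,\gamma_{n-1},\gamma_n\gamma_{n-1},\varepsilon\}$ already arranges into one: the relation $\varepsilon=\varepsilon\gamma_{n-1}$ gives $\varepsilon\mathrel{\le_{\mathrsfs{L}}}\gamma_{n-1}$, equation \eqref{Lclass-gammai} gives $\gamma_{n-1}\mathrel{\mathrsfs{L}}\gamma_n\gamma_{n-1}$, and every element of the group of units is $\mathrsfs{L}$-above all three (indeed $\mathrsfs{L}$-equivalent to the identity). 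Listing the generators as $\varepsilon,\gamma_{n-1},\gamma_n\gamma_{n-1}$ followed by the units thus exhibits $\mathfrak{EA}'_n$ as a $\mathcal{T}_1$-semigroup.

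Second, I would check that $\mathfrak{EA}'_n$ is not aperiodic. Since $\mathfrak{EA}'_n$ contains the group of units $\mathfrak{EA}_n^\times$ of $\mathfrak{EA}_n$ as a subgroup, and this group is cyclic of order $\frac{n}{2}$, it suffices that $\frac{n}{2}>1$. This holds because $n\ge 6$ (in fact already for $n\ge 4$), so $\frac{n}{2}\ge 3$ and $\mathfrak{EA}'_n$ contains a non-trivial subgroup; hence it is not aperiodic. With both hypotheses verified, Proposition \ref{kernelcomplexity} yields at once the strict inequality $c(\mathsf{K}_{\mathbf{G}}(\mathfrak{EA}'_n))<c(\mathfrak{EA}'_n)$.

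I do not expect a real obstacle in this argument: the substantive work --- the idempotent-generation of $\Sing\mathfrak{EA}_n$ in Proposition \ref{singularevenannular} and the identification of the $\mathrsfs{L}$-chain structure of $\mathfrak{EA}'_n$ --- has already been carried out, and the corollary is a one-line consequence of the general machinery of Proposition \ref{kernelcomplexity}. The only point requiring care is the numerical lower bound $\frac{n}{2}\ge 3$ on the order of the group of units, which is exactly what guarantees non-aperiodicity and thereby licenses the application of the proposition for all even $n\ge 6$.
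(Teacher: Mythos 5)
Your proof is correct and follows essentially the same route as the paper: the paper likewise deduces the $\mathcal{T}_1$ property from $\varepsilon=\varepsilon\gamma_{n-1}$ and the $\mathrsfs{L}$-equivalence \eqref{Lclass-gammai}, and then applies Proposition \ref{kernelcomplexity}. Your explicit check of non-aperiodicity via the cyclic group of units $\mathfrak{EA}_n^\times$ of order $\frac{n}{2}\ge 3$ is a hypothesis the paper leaves implicit, so including it is a small but welcome addition.
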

The group of units $\mathfrak{EA}^\times_n$ of $\mathfrak{EA}_n$ consists of the powers of $\zeta^2$, that is,  $$\mathfrak{EA}^\times_n=\{\zeta^2,\zeta^4,\dots,\zeta^{n-2},1\}.$$ Next, we observe that
$$\zeta^{-2}\gamma_{n-1}\zeta^2=\gamma_1,\zeta^{-4}\gamma_{n-1}\zeta^4=\gamma_3,\dots,\zeta^{-(n-2)}\gamma_{n-1}\zeta^{n-2}=\gamma_{n-3}.$$ It follows that
\begin{equation}\label{gai}\gamma_i\in \mathfrak{EA}'_n\mbox{ for each odd $i$.}\end{equation}
 The same argument applied to $\gamma_{n}\gamma_{n-1}$ instead of $\gamma_{n-1}$ implies that \begin{equation}\label{gai-1gai}\gamma_{i+1}\gamma_i\in \mathfrak{EA}'_n\mbox{ for each odd $i$.}\end{equation}
We note that each element of the form $\gamma_{i+1}\gamma_i$ is idempotent.

Next we show that $\mathfrak{EA}_{n-2}$ can be embedded into the idempotent generated subsemigroup $\left<E(\mathfrak{EA}'_n)\right>$ of $\mathfrak{EA}'_n$. First of all, there is an obvious embedding $\mathfrak{EA}_{n-2}\hookrightarrow \mathfrak{EA}_{n}$, namely
$$\alpha\mapsto \alpha\cup\{\{n-1,n\},\{(n-1)',n'\}\}$$
the image of which is exactly the local submonoid $\gamma_{n-1}\mathfrak{EA}_{n}\gamma_{n-1}$. Hence it suffices to show that the latter is contained in $\left<E(\mathfrak{EA}'_n)\right>$. The group of units of $\gamma_{n-1}\mathfrak{EA}_{n}\gamma_{n-1}$ is generated by the diagram
$$\xi:=\{\{1,3'\},\{2,4'\},\dots,\{n-3,1'\},\{n-2,2'\},\{n-1,n\},\{(n-1)',n'\}\}$$
and it is not hard to see that
\begin{equation*}\label{defxi}
\xi=\lambda(\gamma_n\gamma_{n-1})=\gamma_{n-1}(\gamma_{n-2}\gamma_{n-3})\cdots(\gamma_2\gamma_1)(\gamma_n\gamma_{n-1})
\end{equation*} (see Figure \ref{xi}).
\begin{figure}[ht]
\centering
\begin{tikzpicture}
[scale=0.5]
\foreach \x in {0,3,6,9,12,15} \foreach \y in {0,1} \filldraw (\x,\y) circle (4pt);
\filldraw (0,2) circle (4pt); \filldraw (12,2) circle (4pt);
\foreach \x in {3,6,9,15} \draw (\x,2.25) node {$\vdots$};
\foreach \x in {0,3,6,9,12,15} \filldraw (\x,3) circle (4pt);
\draw (0,4.25) node {$\vdots$}; \draw (12,4.25) node {$\vdots$};
\foreach \x in {3,6,9,15} \filldraw (\x, 4) circle (4pt);
\foreach \x in {0,3,6,9,12,15} \foreach \y in {5,6} \filldraw (\x,\y) circle (4pt);
\draw (10.5,3) node {$=$};
\draw (0,6) -- (3,4); \draw (6,6) -- (9,6); \draw (12,6) -- (15,4);
\draw (0,5) -- (3,3); \draw (3,5) -- (9,5); \draw (12,5) -- (15,3);
\draw (3,4) -- (9,4);
\draw (0,3) -- (3,1); \draw (3,3) -- (9,3); \draw (12,3)-- (15,6);
\draw (0,2) -- (3,0); \draw (12,2) -- (15,5);
\draw (3,1) -- (6,1);
\draw (3,6) .. controls (2.5,5.5) and (2.5,5.5) .. (3,5);
\draw (0,1) .. controls (0.5,0.5) and (0.5,0.5) .. (0,0);
\draw (9,1) .. controls (8.5,0.5) and (8.5,0.5) .. (9,0);
\draw (6,1) .. controls (6.5,0.5) and (6.5,0.5) .. (6,0);
\draw (12,1) .. controls (12.5,0.5) and (12.5,0.5) .. (12,0);
\draw (15,1) .. controls (14.5,0.5) and (14.5,0.5) .. (15,0);
\draw (3,6) .. controls (4.5,3) and (4.5,3) .. (3,0);
\draw (6,6) .. controls (4.5,3) and (4.5,3) .. (6,0);
\draw (1.5,-1) node {$\lambda$}; \draw (4.5,-1) node {$\gamma_n$}; \draw (7.5,-1) node {$\gamma_{n-1}$}; \draw (13.5,-1) node{$\xi$};
\end{tikzpicture}
\caption{The element $\xi$.}\label{xi}
\end{figure}
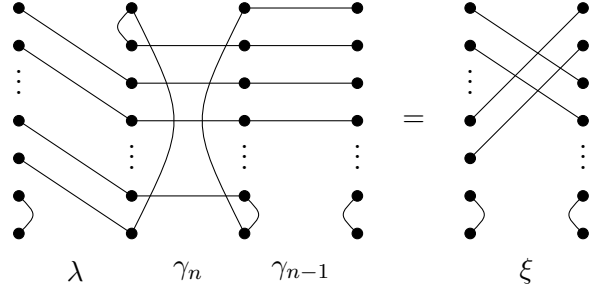
Therefore, the element $\xi$ belongs to $\left<E(\mathfrak{EA}'_n)\right>$
by  (\ref{gai-1gai}) and since each $\gamma_{i+1}\gamma_i$ is idempotent. Consequently, the group of units of $\gamma_{n-1}\mathfrak{EA}_n\gamma_{n-1}$ is contained in $\left<E(\mathfrak{EA}'_n)\right>$.  It remains to show that the singular part of $\gamma_{n-1}\mathfrak{EA}_{n}\gamma_{n-1}$ is contained in $\left<E(\mathfrak{EA}'_n)\right>$. First of all, the projections
$$\gamma'_1:=\gamma_1\gamma_{n-1},\gamma'_3:=\gamma_3\gamma_{n-1},\dots,\gamma'_{n-3}:=\gamma_{n-3}\gamma_{n-1}$$
are all contained in $E(\mathfrak{EA}'_n)$ by (\ref{gai}). We note that $\xi^{\frac{n-2}{2}}=\gamma_{n-1}$ (the identity of the local monoid) and therefore $\xi^*=\xi^{\frac{n-4}{2}}$. The latter element belongs to  $\left<E(\mathfrak{EA}'_n)\right>$ since $\xi$ does so. It follows that the projections
$$\gamma'_2:=\varepsilon, \gamma'_4:=\xi^*\varepsilon\xi,\dots,\gamma'_{n-2}:=(\xi^*)^{\frac{n-4}{2}}\varepsilon\xi^{\frac{n-4}{2}}$$ are also contained in $\left<E(\mathfrak{EA}'_n)\right>$. But by Proposition \ref{singularevenannular}, applied to $\mathfrak{EA}_{n-2}\cong\gamma_{n-1}\mathfrak{EA}_n\gamma_{n-1}$ the singular part of that monoid is generated by the $n-2$ projections $\gamma'_1,\dots,\gamma'_{n-2}$. We have thus proved the following:
\begin{lemma}\label{inclusioninE} For each even $n\ge 6$, $\mathfrak{EA}_{n-2}$ is isomorphic to a subsemigroup of $\left<E(\mathfrak{EA}'_n)\right>$.
\end{lemma}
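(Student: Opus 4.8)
The plan is to factor the desired inclusion through the local submonoid $\gamma_{n-1}\mathfrak{EA}_n\gamma_{n-1}$. Since the map $\alpha\mapsto\alpha\cup\{\{n-1,n\},\{(n-1)',n'\}\}$ embeds $\mathfrak{EA}_{n-2}$ into $\mathfrak{EA}_n$ with image exactly $\gamma_{n-1}\mathfrak{EA}_n\gamma_{n-1}$, it suffices to prove $\gamma_{n-1}\mathfrak{EA}_n\gamma_{n-1}\subseteq\left<E(\mathfrak{EA}'_n)\right>$. As this local monoid decomposes into a group of units and a complementary singular part, I would treat the two separately.

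For the group of units, I would rely on the fact that it is cyclic, generated by the diagram $\xi$, together with the factorization $\xi=\gamma_{n-1}(\gamma_{n-2}\gamma_{n-3})\cdots(\gamma_2\gamma_1)(\gamma_n\gamma_{n-1})$. Every factor here is an idempotent of $\mathfrak{EA}'_n$: the leading $\gamma_{n-1}$ is a projection and a defining generator, while each block $\gamma_{i+1}\gamma_i$ with $i$ odd is idempotent and lies in $\mathfrak{EA}'_n$ by \eqref{gai-1gai}. Hence $\xi\in\left<E(\mathfrak{EA}'_n)\right>$, and so is every power of $\xi$, which exhausts the group of units.

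For the singular part, I would transport Proposition~\ref{singularevenannular} across the isomorphism $\mathfrak{EA}_{n-2}\cong\gamma_{n-1}\mathfrak{EA}_n\gamma_{n-1}$ to obtain a generating set $\gamma'_1,\dots,\gamma'_{n-2}$ of $n-2$ projections, and then place each of these inside $\left<E(\mathfrak{EA}'_n)\right>$. The odd-indexed ones $\gamma'_1,\dots,\gamma'_{n-3}$ are themselves idempotents of $\mathfrak{EA}'_n$ by \eqref{gai}. For the even-indexed ones I would use the conjugation formula $\gamma'_{2+2k}=(\xi^*)^{k}\varepsilon\,\xi^{k}$, starting from $\gamma'_2=\varepsilon$; since $\varepsilon$ is an idempotent generator of $\mathfrak{EA}'_n$ and both $\xi$ and $\xi^*=\xi^{\frac{n-4}{2}}$ already lie in $\left<E(\mathfrak{EA}'_n)\right>$, each such conjugate lies there as well.

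The crux is the treatment of the even-indexed singular generators. The odd $\gamma_i$ are directly available (as shifts of $\gamma_{n-1}$ by the group of units), but the even projections $\gamma'_2,\gamma'_4,\dots$ cannot be reached from the odd $\gamma_i$ alone --- this is precisely why $\varepsilon$ was built into the definition of $\mathfrak{EA}'_n$ --- so the conjugation trick is what makes the argument work. The main obstacle is therefore verifying, by diagram computation, both the factorization of $\xi$ and the identity $\gamma'_{2+2k}=(\xi^*)^{k}\varepsilon\,\xi^{k}$, i.e.\ that conjugating $\varepsilon$ by $\xi$ really shifts its short planar block two positions at a time through the range. Once these diagrammatic identities are confirmed, the inclusion follows with no further complexity-theoretic input.
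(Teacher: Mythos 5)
Your proposal is correct and follows essentially the same route as the paper's own proof: reduction to the local submonoid $\gamma_{n-1}\mathfrak{EA}_n\gamma_{n-1}$, the factorization $\xi=\gamma_{n-1}(\gamma_{n-2}\gamma_{n-3})\cdots(\gamma_2\gamma_1)(\gamma_n\gamma_{n-1})$ into idempotents of $\mathfrak{EA}'_n$ to capture the group of units, and then the generating projections $\gamma'_1,\dots,\gamma'_{n-2}$ of the singular part, with the odd-indexed ones coming from the $\gamma_i$ ($i$ odd) and the even-indexed ones as conjugates $(\xi^*)^k\varepsilon\xi^k$ using $\xi^*=\xi^{\frac{n-4}{2}}$. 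The diagrammatic identities you flag as the remaining work are precisely the ones the paper asserts, so nothing essential is missing.
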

In combination with Corollary \ref{complexity jump} we are able to formulate the next (crucial) statement.
\begin{proposition}\label{complexity expected} For each even $n\ge 6$ the inequality $c(\mathfrak{EA}_{n-2})<c(\mathfrak{EA}_n)$ holds.
\end{proposition}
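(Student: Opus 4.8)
The plan is to deduce the strict inequality from a chain of comparisons linking $c(\mathfrak{EA}_{n-2})$ to $c(\mathfrak{EA}_n)$ in which exactly one link is strict. All the substantial work has already been carried out in Lemma~\ref{inclusioninE} and Corollary~\ref{complexity jump}; what remains is to bolt these together using two soft facts, one about monotonicity of complexity and one about the type~II subsemigroup.

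Concretely, I would argue along the chain
\[
c(\mathfrak{EA}_{n-2})\ \le\ c\bigl(\left<E(\mathfrak{EA}'_n)\right>\bigr)\ \le\ c\bigl(\mathsf{K}_{\mathbf G}(\mathfrak{EA}'_n)\bigr)\ <\ c(\mathfrak{EA}'_n)\ \le\ c(\mathfrak{EA}_n).
\]
The first inequality is immediate from Lemma~\ref{inclusioninE}: since $\mathfrak{EA}_{n-2}$ embeds into $\left<E(\mathfrak{EA}'_n)\right>$ it divides that semigroup, and complexity is monotone under division, so a divisor cannot have larger complexity. The last inequality is of the same nature, because $\mathfrak{EA}'_n$ is by construction a subsemigroup of $\mathfrak{EA}_n$, whence $c(\mathfrak{EA}'_n)\le c(\mathfrak{EA}_n)$. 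The strict middle inequality is precisely Corollary~\ref{complexity jump}.

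The one step requiring a genuine (if short) argument is the second inequality, $c\bigl(\left<E(\mathfrak{EA}'_n)\right>\bigr)\le c\bigl(\mathsf{K}_{\mathbf G}(\mathfrak{EA}'_n)\bigr)$. For this I would appeal to Ash's theorem as recalled in the preliminaries: $\mathsf{K}_{\mathbf G}(S)$ is the smallest subsemigroup of $S$ that contains all idempotents and is closed under weak conjugation. In particular $\mathsf{K}_{\mathbf G}(\mathfrak{EA}'_n)$ is a subsemigroup containing $E(\mathfrak{EA}'_n)$, hence it contains the subsemigroup $\left<E(\mathfrak{EA}'_n)\right>$ generated by the idempotents. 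Monotonicity of complexity under subsemigroups then yields the desired inequality.

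The genuine obstacle of this subsection is therefore not the present proposition but the preparatory results on which it rests: arranging that $\mathfrak{EA}'_n$ is a $\mathcal{T}_1$-semigroup so that Proposition~\ref{kernelcomplexity} applies (Corollary~\ref{complexity jump}), and --- more delicately --- exhibiting a copy of $\mathfrak{EA}_{n-2}$ inside the idempotent-generated part $\left<E(\mathfrak{EA}'_n)\right>$ (Lemma~\ref{inclusioninE}). Granting those, the proof of Proposition~\ref{complexity expected} is exactly the short assembly displayed above.
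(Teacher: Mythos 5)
Your proposal is correct and follows exactly the paper's own proof: the same four-step chain $c(\mathfrak{EA}_{n-2})\le c(\left<E(\mathfrak{EA}'_n)\right>)\le c(\mathsf{K}_{\mathbf G}(\mathfrak{EA}'_n))< c(\mathfrak{EA}'_n)\le c(\mathfrak{EA}_n)$, justified by Lemma~\ref{inclusioninE}, monotonicity of complexity under division, and Corollary~\ref{complexity jump}. In fact you supply slightly more detail than the paper, which leaves the middle inequality (that $\mathsf{K}_{\mathbf G}(\mathfrak{EA}'_n)$ contains $\left<E(\mathfrak{EA}'_n)\right>$ by Ash's theorem) unjustified.
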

\begin{proof}This follows from
\begin{align*}c(\mathfrak{EA}_{n-2})&\le c(\left<E(\mathfrak{EA}'_n)\right>)&\mbox{ by Lemma \ref{inclusioninE} }\\
                                    &\le c(\mathsf{K}_{\mathbf{G}}(\mathfrak{EA}'_n))&\\
                                    &< c(\mathfrak{EA}'_n)&\mbox{ by Corollary \ref{complexity jump} }\\
                                     &\le c(\mathfrak{EA}_n).&\\
\end{align*}
\end{proof}
It is straightforward that $c(\mathfrak{EA}_2)=0$; in $\mathfrak{EA}_4$ the only essential $\mathrsfs{J}$-class  is the set of all rank $4$ elements hence $c(\mathfrak{EA}_4)=1$. For each $n\in \mathbb{N}$ we have that $c(\mathfrak{EA}_{2n})\le c(\mathfrak{EA}_{2n-2})+1$. Indeed, $\mathfrak{EA}_{2n-2}\cong\gamma_{n-1}\mathfrak{EA}_{2n}\gamma_{n-1}$, whence $c(\mathfrak{EA}_{2n-2})=c(\mathfrak{EA}_n\gamma_{n-1}\mathfrak{EA}_n)$ by Lemma \ref{submonoids} and the latter semigroup is the ideal $\Sing \mathfrak{EA}_n$ of all singular elements of $\mathfrak{EA}_n$; the claim then follows from Lemma \ref{ideal lemma} by taking into account that $c(\mathfrak{EA}_n/\Sing \mathfrak{EA}_n)=1$. This, in combination with
Proposition \ref{complexity expected}, then gives $c(\mathfrak{EA}_{2n})=c(\mathfrak{EA}_{2n-2})+1$ for all $n \ge 2$. By induction we get:
\begin{theorem}\label{cEAn}  The equality $c(\mathfrak{EA}_{2n})=n-1$ holds for each positive integer $n$.
\end{theorem}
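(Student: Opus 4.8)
The plan is to deduce the closed formula from the increment-by-one recursion already assembled in the preceding discussion, via a routine induction on $n$. The single fact I would isolate is that
$$c(\mathfrak{EA}_{2n})=c(\mathfrak{EA}_{2n-2})+1\qquad\text{for every } n\ge 2.$$
For the inequality $\le$ I would invoke the Ideal Theorem (Lemma \ref{ideal lemma}) applied to the ideal $\Sing\mathfrak{EA}_{2n}=\mathfrak{EA}_{2n}\gamma_{n-1}\mathfrak{EA}_{2n}$: since $c(\mathfrak{EA}_{2n}/\Sing\mathfrak{EA}_{2n})=1$ and, by Lemma \ref{submonoids}, $c(\mathfrak{EA}_{2n}\gamma_{n-1}\mathfrak{EA}_{2n})=c(\gamma_{n-1}\mathfrak{EA}_{2n}\gamma_{n-1})=c(\mathfrak{EA}_{2n-2})$, one obtains $c(\mathfrak{EA}_{2n})\le c(\mathfrak{EA}_{2n-2})+1$. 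For the reverse inequality $\ge$ I would use the strict bound $c(\mathfrak{EA}_{2n-2})<c(\mathfrak{EA}_{2n})$, which is exactly Proposition \ref{complexity expected} when $2n\ge 6$, and which follows from the explicit values $c(\mathfrak{EA}_2)=0$ and $c(\mathfrak{EA}_4)=1$ when $2n=4$.

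With this recursion in hand the induction closes immediately. For the base case $n=1$ I use $c(\mathfrak{EA}_2)=0=1-1$. For the inductive step, assuming $c(\mathfrak{EA}_{2(n-1)})=n-2$, the recursion gives $c(\mathfrak{EA}_{2n})=c(\mathfrak{EA}_{2n-2})+1=(n-2)+1=n-1$, which is the asserted value; this completes the induction.

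Since the two substantive ingredients have already been established --- namely the embedding of $\mathfrak{EA}_{2n-2}$ into the idempotent-generated subsemigroup $\langle E(\mathfrak{EA}'_{2n})\rangle$ (Lemma \ref{inclusioninE}) and the strict complexity drop $c(\mathsf{K}_{\mathbf G}(\mathfrak{EA}'_{2n}))<c(\mathfrak{EA}'_{2n})$ arising from the $\mathcal{T}_1$ structure and Ash's theorem (Corollary \ref{complexity jump}) --- there is no genuine obstacle remaining at this stage: the theorem is the book-keeping that converts a strict, increment-by-at-most-one recursion into an exact formula. The only subtlety I would flag is the range of validity, namely that Proposition \ref{complexity expected} requires even degree $\ge 6$, so the first step, from $\mathfrak{EA}_2$ to $\mathfrak{EA}_4$, must be supplied by the direct computation $c(\mathfrak{EA}_4)=1$ rather than by the general argument, after which the induction proceeds uniformly for all larger $n$.
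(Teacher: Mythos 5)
Your proposal is correct and follows essentially the same route as the paper: the paper likewise combines the upper bound $c(\mathfrak{EA}_{2n})\le c(\mathfrak{EA}_{2n-2})+1$ (via Lemma \ref{submonoids} applied to $\gamma_{n-1}\mathfrak{EA}_{2n}\gamma_{n-1}\cong\mathfrak{EA}_{2n-2}$, the identification of the ideal $\mathfrak{EA}_{2n}\gamma_{n-1}\mathfrak{EA}_{2n}$ with $\Sing\mathfrak{EA}_{2n}$, and the Ideal Theorem) with the strict inequality of Proposition \ref{complexity expected}, anchored by the directly computed values $c(\mathfrak{EA}_2)=0$ and $c(\mathfrak{EA}_4)=1$, and concludes by induction. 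Your remark that the step from $\mathfrak{EA}_2$ to $\mathfrak{EA}_4$ must be handled by the explicit computation rather than by Proposition \ref{complexity expected} (which needs even degree at least $6$) matches exactly how the paper organizes the argument.
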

An immediate consequence is that $c(\mathfrak{A}_{2n})\ge n-1$ for all positive integers. On the other hand, since the depth of $\mathfrak{A}_{2n}$ is $n$  we also have $c(\mathfrak{A}_{2n})\le n$ for all $n$. In order to determine the exact value we need to look at small values of $n$. Clearly, $c(\mathfrak{A}_2)=1$ since $\mathfrak{A}_2$ has exactly one essential $\mathrsfs{J}$-class. It turns out that the crucial point is the value $c(\mathfrak{A}_4)$.  Although $\mathfrak{A}_4$ has two essential $\mathrsfs{J}$-classes its complexity is only $1$.
\begin{lemma}\label{degree4} $c(\mathfrak{A}_4)=1$.
\end{lemma}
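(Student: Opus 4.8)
The lower bound is immediate: $\mathfrak{EA}_4$ is a subsemigroup of $\mathfrak{A}_4$, so $c(\mathfrak{A}_4)\ge c(\mathfrak{EA}_4)=1$ by Theorem \ref{cEAn}. Since $\mathfrak{A}_4$ has depth $2$ we already know $c(\mathfrak{A}_4)\le 2$, so the whole content of the lemma is to \emph{rule out} the value $2$. The difficulty is that $\mathfrak{A}_4$ has two essential $\mathrsfs{J}$-classes, the unit group $\cong C_4$ (generated by the rotation $\zeta$ of \eqref{zeta}) and the rank-$2$ class with maximal subgroup $\cong C_2$, and these resist every cheap tool. Indeed, writing $e$ for a rank-$2$ idempotent and $\rho=\zeta^{2}e$ for the non-identity element of its maximal subgroup, $\rho$ is even of rank $2$, so $\rho\in\Sing\mathfrak{EA}_4=\langle\gamma_1,\dots,\gamma_4\rangle\subseteq\langle E(\mathfrak{A}_4)\rangle$ by Proposition \ref{singularevenannular}. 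Hence $\rho\in\mathsf{K}_{\mathbf G}(\mathfrak{A}_4)$, which shows $\mathfrak{A}_4\notin\mathbf A*\mathbf G$ (Proposition \ref{A*G}) and also that $\mathfrak{A}_4$ is \emph{not} a $\mathcal T_1$-semigroup (otherwise Proposition \ref{kernelcomplexity} would force $c(\mathfrak{A}_4)>c(\mathsf{K}_{\mathbf G}(\mathfrak{A}_4))\ge 1$, i.e.\ $c(\mathfrak{A}_4)\ge 2$). For the same reason $\Sing\mathfrak{A}_4$ is not idempotent generated, so Proposition \ref{principal proposition} does not apply, and Lemma \ref{ideal lemma} alone only yields $c(\mathfrak{A}_4)\le c(\Sing\mathfrak{A}_4)+1\le 2$. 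This is precisely the exceptional behaviour advertised in the introduction.

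The plan for the upper bound $c(\mathfrak{A}_4)\le 1$ is to use a construction that keeps \emph{both} essential group levels on a single group layer. First I would strip off the aperiodic part for free: the rank-$0$ elements form an ideal contained in $\mathfrak{J}_4$, hence aperiodic, so by Lemma \ref{ideal lemma} it suffices to bound $c(\bar S)$, where $\bar S:=\mathfrak{A}_4/(\text{rank }0)$ is the two-level Rees quotient carrying only the units, the rank-$2$ class, and a zero. The structural fact I would exploit is the linkage $\rho=\zeta^{2}e$: the rank-$2$ group $C_2$ is the image, up to an idempotent, of the square of the same rotation $\zeta$ that generates the unit group $C_4$. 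Thus a \emph{single} cyclic group governs all of the group-theoretic content of $\bar S$, and the aim is to realise this by a division $\bar S\prec A_1\wr C_4\wr A_0$ in which the $C_4$-coordinate simultaneously records the winding of the units and, through $\zeta^2$, the rank-$2$ twist, while the two aperiodic factors $A_0,A_1$ record only the capping/connectivity pattern (the ``planar shape'') of the annular diagram. Granting such a division, $c(\mathfrak{A}_4)\le c(\bar S)\le c(A_1)+c(C_4)+c(A_0)=1$, which together with the lower bound finishes the proof.

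The heart of the argument, and the step I expect to be the main obstacle, is producing this division; equivalently, exhibiting a relational morphism from $\bar S$ onto an aperiodic ``shape'' semigroup $A_0$ whose derived category lies in $\mathbf A*\mathbf G$ (so that $\bar S\in(\mathbf A*\mathbf G)*\mathbf A=\mathbf A*\mathbf G*\mathbf A$). The delicate point is that the $C_4$-coordinate \emph{cannot} come from a homomorphism $\mathfrak{A}_4\to C_4$: on singular elements the winding is genuinely ill-defined, since the two through strings of a rank-$2$ diagram may have different offsets modulo $4$, and $\rho$ is even and so indistinguishable from $e$ by the only homomorphic invariant available, parity. Consequently the winding must enter through a relational morphism, and one must verify that, once the rank-$0$ fusions have been removed, the residual ``$C_2$ twist'' no longer generates an independent group in the derived category, i.e.\ that the derived category (equivalently the shape semigroup $A_0$) is aperiodic.

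Concretely I expect this to reduce to a finite computation of the structure matrix of the rank-$2$ principal factor of $\bar S$ and a check that its idempotents generate the $C_2$-coordinate \emph{trivially inside} $\bar S$, in sharp contrast to the full semigroup $\mathfrak{A}_4$, where every factorisation of $\rho$ into idempotents passes through rank $0$. Since $\mathfrak{A}_4$ is a small explicit semigroup, this verification can in the last resort be carried out by direct inspection. The conceptual content is exactly that excising the aperiodic rank-$0$ ideal \emph{decouples} the rank-$2$ group from the unit group, collapsing the naive two group layers into a single $C_4$ and thereby lowering the complexity from the depth $2$ to $1$.
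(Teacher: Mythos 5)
Your proposal founders on a concrete miscalculation, and this miscalculation inverts the entire logic of your argument. You claim that $\rho=\zeta^{2}e$ is the non-identity element of the maximal subgroup of a rank-$2$ idempotent $e$, that $\rho$ is even, hence $\rho\in\Sing\mathfrak{EA}_4\subseteq\langle E(\mathfrak{A}_4)\rangle\subseteq\mathsf{K}_{\mathbf G}(\mathfrak{A}_4)$, and therefore $\mathfrak{A}_4\notin\mathbf{A}*\mathbf{G}$. None of this holds. The domain of a rank-$2$ element of $\mathfrak{A}_4$ consists of two cyclically adjacent points (domain elements alternate in parity), so no such domain is invariant under the shift by $2$; hence $\zeta^{2}e$ is $\mathrsfs{L}$- but not $\mathrsfs{R}$-related to $e$ and does not lie in its $\mathrsfs{H}$-class at all --- indeed for $e=\gamma_1$ one computes $\zeta^{2}\gamma_1=\{\{1,3'\},\{2,4'\},\{3,4\},\{1',2'\}\}$ and $(\zeta^{2}\gamma_1)^{2}$ has rank $0$, so $\zeta^{2}\gamma_1$ is not even a group element. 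The actual non-identity element of the maximal subgroup of $\gamma_1$ is the diagram with crossed through strings $\{3,4'\},\{4,3'\}$, and this element is \emph{odd}, hence outside $\mathfrak{EA}_4$ and outside $\langle E(\mathfrak{A}_4)\rangle$ (all idempotents are even and $\mathfrak{EA}_4$ is a submonoid); this parity obstruction, not your $\rho$, is also the real reason why $\Sing\mathfrak{A}_4$ fails to be idempotent generated. The paper proves exactly the opposite of your central claim: $\mathsf{K}_{\mathbf G}(\mathfrak{A}_4)=\Sing\mathfrak{EA}_4\cup\{1\}$, which is \emph{aperiodic} (the rank-$2$ maximal subgroups of $\mathfrak{EA}_4$ are cyclic of order $\tfrac{2}{2}=1$), so $\mathfrak{A}_4\in\mathbf{A}*\mathbf{G}$ by Proposition \ref{A*G} and $c(\mathfrak{A}_4)\le 1$ follows at once. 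The upper bound is obtained there by exhibiting the relational morphism $\tau=\tau_1\times\tau_2$ into the group $\mathfrak{A}_4^\times\times\{-1,1\}$ ($\tau_1$ collapsing the non-units, $\tau_2$ recording the parity of elements of rank $\ge 2$), which forces $\mathsf{K}_{\mathbf G}(\mathfrak{A}_4)\subseteq\Sing\mathfrak{EA}_4\cup\{1\}$, while the reverse inclusion is Proposition \ref{singularevenannular} plus Ash's theorem.

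Because your premise is inverted, the machinery you then assemble to ``avoid'' $\mathbf{A}*\mathbf{G}$ --- the Rees quotient by the rank-$0$ ideal and a division $\bar S\prec A_1\wr C_4\wr A_0$ --- is solving a problem that does not exist; and in any case that part of the proposal is not a proof. The division is only postulated (``granting such a division''), the aperiodicity of the derived category is deferred to ``a finite computation'' or ``direct inspection'', and neither is carried out; moreover the heuristic linkage $\rho=\zeta^{2}e$ that is supposed to make the single $C_4$ layer plausible is precisely the false identity above. So even read charitably as an attempted membership proof for $\mathbf{A}*\mathbf{G}*\mathbf{A}$, the upper bound is missing its key step. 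The short correct route is the one the paper takes: show that the type II subsemigroup is aperiodic via the parity relational morphism and invoke Proposition \ref{A*G}; the lower bound, as you note, is immediate.
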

\begin{proof} By Proposition \ref{A*G} it suffices to show that the type II subsemigroup $\mathsf{K}_{\mathbf{G}}(\mathfrak{A}_4)$ is aperiodic. Define relations $\tau_1:\mathfrak{A}_4\to \mathfrak{A}^\times_4$ and $\tau_2:\mathfrak{A}_4\to \{-1,1\}$ as follows:
\begin{equation*}
x\tau_1=
\begin{cases}
x &\text{if $x\in \mathfrak{A}_4^\times$,}\\
\mathfrak{A}^\times_4 &\text{if $x\notin \mathfrak{A}_4^\times$}
\end{cases}
\end{equation*}
and
\begin{equation*}
x\tau_2=
\begin{cases}
-1 &\text{if $\rk x\ge 2$ and $x$ is odd,}\\
1 &\text{if $\rk x\ge 2$ and $x$ is even,}\\
\{-1,1\} &\text{if $\rk x=0$.}
\end{cases}
\end{equation*}
It is easily checked that $\tau_1$ and $\tau_2$ are relational morphisms. Let $\tau=\tau_1\times \tau_2$; then $1\tau^{-1}=\mathsf{Sing}\mathfrak{EA}_4\cup \{1\}$. Since $\mathsf{Sing}\mathfrak{EA}_4$ is idempotent generated we infer that  $\mathsf{K}_{\mathbf G}(\mathfrak{A}_4)=\mathsf{Sing}\mathfrak{EA}_4\cup \{1\}$ and the latter is aperiodic.
\end{proof}
It is worth to point out that the preceding Lemma is also a consequence of Tilson's $2\mathrsfs{J}$-class Theorem \cite[Theorem 4.15.2]{qtheory}. As in the case of the even annular semigroup, we have $c(\mathfrak{A}_{2n})\le c(\mathfrak{A}_{2n-2})+1$ (the argument is very much analogous to the one before the statement of Theorem \ref{cEAn}). This, in combination with $c(\mathfrak{A}_4)=1$ and $c(\mathfrak{A}_{2n})\ge n-1$ for all $n$ then leads to the main result in this subsection.
\begin{theorem}\label{annulareven}  The equality $c(\mathfrak{A}_{2n})=n-1$ holds for each  integer $n\ge 2$; for $n=1$ the equality $c(\mathfrak{A}_2)=1$ holds.
\end{theorem}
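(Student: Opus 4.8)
The plan is to establish the two inequalities $c(\mathfrak{A}_{2n})\ge n-1$ and $c(\mathfrak{A}_{2n})\le n-1$ (for $n\ge 2$) separately, since both directions have already been essentially prepared in the preceding discussion. The lower bound $c(\mathfrak{A}_{2n})\ge n-1$ is immediate: the even annular semigroup $\mathfrak{EA}_{2n}$ is a subsemigroup of $\mathfrak{A}_{2n}$, and complexity is monotone under taking subsemigroups, so Theorem \ref{cEAn} gives $c(\mathfrak{A}_{2n})\ge c(\mathfrak{EA}_{2n})=n-1$. This much is already recorded in the paragraph before the statement, so the real work is entirely in the upper bound.

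For the upper bound I would argue by induction on $n$, with base case $n=2$, i.e. $c(\mathfrak{A}_4)=1$, supplied by Lemma \ref{degree4}. For the inductive step I would reproduce the ideal argument used for $\mathfrak{EA}_{2n}$ just before Theorem \ref{cEAn}, but now inside $\mathfrak{A}_{2n}$. Concretely: the local submonoid $\gamma_{n-1}\mathfrak{A}_{2n}\gamma_{n-1}$ is isomorphic to $\mathfrak{A}_{2n-2}$ via the embedding $\alpha\mapsto\alpha\cup\{\{n-1,n\},\{(n-1)',n'\}\}$, so by Lemma \ref{submonoids} one has $c(\mathfrak{A}_{2n}\gamma_{n-1}\mathfrak{A}_{2n})=c(\gamma_{n-1}\mathfrak{A}_{2n}\gamma_{n-1})=c(\mathfrak{A}_{2n-2})$. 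The principal ideal $\mathfrak{A}_{2n}\gamma_{n-1}\mathfrak{A}_{2n}$ is exactly the ideal $\Sing\mathfrak{A}_{2n}$ of all singular (non-invertible) elements, since $\gamma_{n-1}$ has rank $2n-2$ and every element of smaller-than-maximal rank lies in the ideal generated by any rank-$(2n-2)$ projection. The Rees quotient $\mathfrak{A}_{2n}/\Sing\mathfrak{A}_{2n}$ is the top $\mathrsfs{J}$-class with a zero adjoined; its maximal subgroup is the cyclic group $\mathfrak{A}_{2n}^\times$, so this quotient is (a $0$-extension of) a group and hence has complexity $1$. The Ideal Theorem (Lemma \ref{ideal lemma}) then yields
\begin{equation*}
c(\mathfrak{A}_{2n})\le c(\Sing\mathfrak{A}_{2n})+c(\mathfrak{A}_{2n}/\Sing\mathfrak{A}_{2n})=c(\mathfrak{A}_{2n-2})+1,
\end{equation*}
and the induction hypothesis $c(\mathfrak{A}_{2n-2})\le n-2$ closes the step.

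The point I expect to require the most care is the base of the induction rather than the inductive step. The clean recursion $c(\mathfrak{A}_{2n})\le c(\mathfrak{A}_{2n-2})+1$ only gives $c(\mathfrak{A}_{2n})\le n$ if one starts from the naive bound $c(\mathfrak{A}_2)=1$ (which would match the depth $n$, not $n-1$). The entire saving of one unit hinges on Lemma \ref{degree4}, which shows that $\mathfrak{A}_4$ — despite having two essential $\mathrsfs{J}$-classes and hence depth $2$ — has complexity only $1$. In other words, the complexity drop relative to the depth happens once, at the very bottom, and is then propagated upward by the recursion; the crucial numerical coincidence is that anchoring the induction at $c(\mathfrak{A}_4)=1$ (rather than $c(\mathfrak{A}_2)=1$) shifts every subsequent value down by one, giving $c(\mathfrak{A}_{2n})\le (n-2)+1=n-1$. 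I would therefore present the argument as: lower bound from $\mathfrak{EA}_{2n}$, upper bound by the ideal recursion anchored at Lemma \ref{degree4}, with the $n=1$ case $c(\mathfrak{A}_2)=1$ noted separately as an exception since there $n-1=0$ but $\mathfrak{A}_2$ still carries a non-trivial top group.
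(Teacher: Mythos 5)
Your proposal is correct and follows essentially the same route as the paper: the lower bound comes from the subsemigroup $\mathfrak{EA}_{2n}$ via Theorem \ref{cEAn}, and the upper bound from the recursion $c(\mathfrak{A}_{2n})\le c(\mathfrak{A}_{2n-2})+1$ (Lemma \ref{submonoids} plus the Ideal Theorem applied to $\Sing\mathfrak{A}_{2n}$), anchored at $c(\mathfrak{A}_4)=1$ from Lemma \ref{degree4} --- exactly the argument the paper invokes as ``very much analogous to the one before the statement of Theorem \ref{cEAn}'', which you have merely spelled out in full (modulo the harmless index slip $\gamma_{n-1}$ versus $\gamma_{2n-1}$, which the paper itself also makes).
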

\subsection{Odd degree} Throughout this subsection let $n$ be odd. This case is easier since the singular part $\Sing \mathfrak{A}_n$ is idempotent generated. An analogous statement in the context of annular algebras has been mentioned without proof in \cite[Remark 2.9]{jonesannular}. We retain the notation of the preceding subsection.
\begin{proposition} For each odd positive integer $n$, $\Sing \mathfrak{A}_n=\left<\gamma_1,\dots,\gamma_n\right>$.
\end{proposition}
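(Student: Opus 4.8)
The plan is to mirror the proof of Proposition~\ref{singularevenannular}, exploiting that for odd $n$ the parity bookkeeping needed there becomes vacuous. Setting $\mathfrak{S}:=\left<\gamma_1,\dots,\gamma_n\right>$, I first note that $\mathfrak{S}\subseteq\Sing\mathfrak{A}_n$, that $\mathfrak{S}$ is closed under $^*$, and --- exactly as in the preceding subsection, since those arguments never used the parity of $n$ --- that $\mathfrak{S}$ is closed under all shifts $\mathsf{S}_k$ and that $\lambda=\gamma_{n-1}\cdots\gamma_1\in\mathfrak{S}$ yields $\mathfrak{S}\mathsf{T}_2\subseteq\mathfrak{S}$, i.e.\ $\mathfrak{S}$ is closed under every even twist.

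The one genuinely new ingredient is a number-theoretic observation: since $n$ is odd, $2$ is invertible modulo $n$, so the residues $2m\bmod n$ (for $m\in\mathbb{Z}$) exhaust $\mathbb{Z}/n\mathbb{Z}$. Because twists add ($\mathsf{T}_a\mathsf{T}_b=\mathsf{T}_{a+b}$), closure under $\mathsf{T}_2$ upgrades to closure of $\mathfrak{S}$ under \emph{every} twist $\mathsf{T}_k$, with no restriction on parity. This is precisely what frees us from the even/odd distinction that constrained the previous subsection.

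With all twists available, the reduction proceeds uniformly. Given $\alpha\in\Sing\mathfrak{A}_n$ of positive rank, I pick a through string and apply a suitable shift to move it into the form $\{1,j'\}$, then the twist $\mathsf{T}_{-(j-1)}$ (now legitimate for every $j$, not only for odd $j$) to bring it to $\{1,1'\}$. Any member of $\mathfrak{A}_n$ carrying the block $\{1,1'\}$ has all its through strings even (as $1-1$ is even and, as shown earlier, the through strings of a diagram are simultaneously even or odd), and hence lies in $\Sing\mathfrak{J}_n$; by Lemma~\ref{lemmadosen} it belongs to $\left<\gamma_1,\dots,\gamma_{n-1}\right>\subseteq\mathfrak{S}$. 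Undoing the twist and the shift --- both of which preserve $\mathfrak{S}$ --- then places $\alpha$ in $\mathfrak{S}$. The rank-zero case is disposed of as before, since every rank-zero element of $\mathfrak{A}_n$ already lies in $\mathfrak{J}_n\subseteq\mathfrak{S}$.

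The step requiring the most care is ``block $\{1,1'\}$ implies planar'', namely that an element of $\mathfrak{A}_n$ containing $\{1,1'\}$ lies in $\mathfrak{J}_n$ --- the same fact invoked in the even subsection, which I would justify by cutting the annulus along a radial representative of the string $\{1,1'\}$ to unroll the diagram into a non-crossing rectangular one. Apart from this, the odd case is strictly simpler than the even one: the full singular part (not merely an even submonoid) is reached, and no separate treatment of odd elements is needed, since an odd twist --- available precisely because $n$ is odd --- converts an odd element into an even planar one and back.
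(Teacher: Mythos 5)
Your proof is correct and follows the same skeleton as the paper's (closure of $\mathfrak{S}=\left<\gamma_1,\dots,\gamma_n\right>$ under shifts and twists, then shift-and-twist a through string into $\{1,1'\}$, conclude planarity, finish with Lemma \ref{lemmadosen}, and treat rank zero separately), but it obtains the key step --- closure under \emph{all} twists --- by a genuinely different and shorter route. The paper constructs an explicit element realizing the unit twist: it shows that $\xi=\gamma_{n-1}\gamma_{n-2}\cdots\gamma_1\gamma_n\gamma_{n-1}$ generates the group $\mathrsfs{H}$-class of $\gamma_{n-1}$ (this is where oddness enters for the paper: that cyclic group has odd order $n-2$), puts $\tau:=\xi^{\frac{n-1}{2}}\gamma_n\in\mathfrak{S}$, and checks that $\alpha\cdot\tau\mathsf{S}_{-(n-i)}=\alpha\mathsf{T}_1$ for any singular $\alpha$ containing the outer string $\{(i-1)',i'\}$, whence $\mathfrak{S}\mathsf{T}_{\pm 1}\subseteq\mathfrak{S}$. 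You instead reuse the even-case element $\lambda=\gamma_{n-1}\cdots\gamma_1$ --- correctly observing that nothing in that computation used the parity of $n$ --- to get $\mathfrak{S}\mathsf{T}_2\subseteq\mathfrak{S}$, and then let oddness enter purely arithmetically: since $\mathsf{T}_2^m=\mathsf{T}_{2m\bmod n}$ and $2$ is invertible modulo odd $n$, the powers of $\mathsf{T}_2$ already exhaust every $\mathsf{T}_k$. This saves the computation of $\xi$, of $\xi^{\frac{n-1}{2}}$ and of $\tau$ entirely; what the paper's construction buys is merely an explicit product of the $\gamma_i$ realizing $\mathsf{T}_1$, which is not needed elsewhere. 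The planarity step (containing $\{1,1'\}$ implies membership in $\mathfrak{J}_n$) is asserted without proof by the paper in both the even and the odd case; your cutting-the-annulus justification is exactly the right argument for it.

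Two non-load-bearing slips are worth flagging. First, your parenthetical claim that a diagram containing $\{1,1'\}$ has all through strings even ``as shown earlier'' is false for odd $n$: the simultaneous-parity statement of the even subsection genuinely requires $n$ even, since the parity of $i-j$ is not well defined modulo an odd $n$ (in $\mathfrak{A}_3$ the unit $\zeta$ has through strings $\{1,2'\}$ and $\{3,1'\}$ of opposite parities); the same applies to your closing remark about odd twists converting odd diagrams into even ones. Since planarity follows from the cutting argument alone, nothing is lost by deleting these remarks. Second, in the rank-zero case you write $\mathfrak{J}_n\subseteq\mathfrak{S}$, which cannot hold ($1\in\mathfrak{J}_n$ while $\mathfrak{S}\subseteq\Sing\mathfrak{A}_n$); what you mean, and what suffices, is $\Sing\mathfrak{J}_n\subseteq\mathfrak{S}$, which is Lemma \ref{lemmadosen}.
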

\begin{proof}
The proof is similar to that of Proposition \ref{singularevenannular}; let $\mathfrak{S}:=\left<\gamma_1,\dots,\gamma_n\right>$. Once again, $\mathfrak{S}$ is closed under all shifts $\mathsf{S}_k$. As in the case for even $n$, for
$$\xi=\gamma_{n-1}\gamma_{n-2}\cdots\gamma_1\gamma_n\gamma_{n-1}$$ we obtain
$$\xi=\{\{1,3'\},\{2,4'\},\dots, \{n-3,1'\},\{n-2,2'\},\{n-1,n\},\{(n-1)',n'\}\}$$ (see Figure \ref{xi}). Thus $\xi\vert_{[n-2]}$ realizes the cyclic permutation on $[n-2]$ given by $x\mapsto x+2\ (\bmod\ n-2)$. Since $n-2$ is odd, this permutation has order $n-2$ and so $\xi$ generates the group $\mathrsfs{H}$-class of $\gamma_{n-1}$. More specifically,
$$\xi^{\frac{n-1}{2}}=\{\{1,2'\},\{2,3'\}\dots,\{n-2,1'\},\{n-1,n\},\{(n-1)',n'\}\}.$$
It follows that
$$\tau:=\xi^{\frac{n-1}{2}}\gamma_n=\{\{1,2'\},\{2,3'\}\dots,\{n-2,(n-1)'\},\{n-1,n\},\{1',n'\}\}$$
(see Figure \ref{xigan}), and $\tau$ belongs to $\mathfrak{S}$.
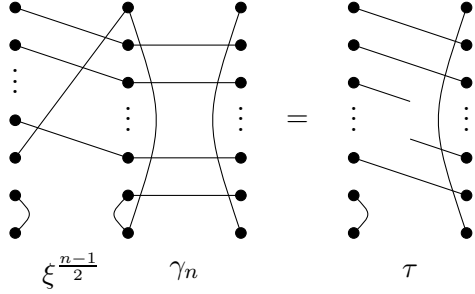
\begin{figure}
\centering
\begin{tikzpicture}
[scale=0.5]
\foreach \x in {0,3,6} \foreach \y in {0,1,2} \filldraw (\x,\y) circle (4pt);
\foreach \x in {0,3,6} \foreach \y in {5,6} \filldraw (\x,\y) circle (4pt);
\filldraw (0,3) circle (4pt); \filldraw (3,4) circle (4pt); \filldraw (6,4) circle (4pt);
\draw (3,3.25) node{$\vdots$}; \draw (6,3.25) node{$\vdots$}; \draw (0,4.25) node{$\vdots$};
\draw (0,6) -- (3,5) -- (6,5);
\draw (0,5) -- (3,4) -- (6,4);
\draw (0,3) -- (3,2) -- (6,2);
\draw (3,1) -- (6,1); \draw ((0,2) --(3,6);
\draw (0,1) .. controls (0.5,0.5) and (0.5,0.5) .. (0,0);
\draw (3,1) .. controls (2.5,0.5) and (2.5,0.5) .. (3,0);
\draw (6,0) .. controls (5,3) and (5,3) .. (6,6);
\draw (3,0) .. controls (4,3) and (4,3) .. (3,6);
\draw (1.5,-1) node{$\xi^{\frac{n-1}{2}}$};
\draw (4.5,-1) node{$\gamma_n$};
\draw (7.5,3) node{$=$};
\foreach \x in {9,12} \foreach \y in {0,1,2,4,5,6} \filldraw (\x,\y) circle (4pt);
\foreach \x in {9,12} \draw (\x,3.25) node{$\vdots$};
\draw (9,6) -- (12,5);
\draw (9,5) -- (12,4);
\draw (9,4) -- (10.5,3.5);
\draw (10.5,2.5) -- (12,2);
\draw (9,2) -- (12,1);
\draw (9,1) .. controls (9.5,0.5) and (9.5,0.5) .. (9,0);
\draw (12,0) .. controls (11,3) and (11,3) .. (12,6);
\draw (10.5,-1) node{$\tau$};
\end{tikzpicture}
\caption{The element $\tau$.}\label{xigan}
\end{figure}
Suppose now that $\alpha$ is a singular element of $\mathfrak{A}_n$ containing the outer string $\{(n-1)',n'\}$. Then $\alpha\tau= \alpha\mathsf{T}_1$. More generally, if $\alpha$ is an arbitrary singular element of $\mathfrak{A}_n$ then it contains the outer string $\{(i-1)',i'\}$ for some $i$. A direct calculation shows that
$$\alpha\cdot\tau\mathsf{S}_{-(n-i)}=\alpha\mathsf{T}_1.$$ As a consequence, $\mathfrak{S}\mathsf{T}_1\subseteq \mathfrak{S}$. Since $\mathsf{T}_{-1}=\mathsf{T}_{n-1}$, $\mathfrak{S}$ is also closed under $\mathsf{T}_{-1}$. Altogether, $\mathfrak{S}$ is closed under twists $\mathsf{T}_k$ for \textbf{each} $k\in \mathbb{Z}$. Let now $\alpha$ be an arbitrary  element of $\Sing \mathfrak{A}_n$. Then there exist integers $k,\ell$ such that $\alpha\mathsf{S}_k\mathsf{T}_\ell$ contains the through string $\{1,1'\}$. But then $\alpha\mathsf{S}_k\mathsf{T}_\ell$ is planar and by Lemma  \ref{lemmadosen} $\alpha\mathsf{S}_k\mathsf{T}_\ell\in \left<\gamma_1,\dots,\gamma_{n-1}\right>\subseteq\mathfrak{S}$ so that
$$\alpha\in \mathfrak{S}\mathsf{T}_{-\ell}\mathsf{S}_{-k}\subseteq \mathfrak{S}.$$ Altogether we have obtained the inclusion $\Sing \mathfrak{A}_n\subseteq \mathfrak{S}$.
\end{proof}
Since each $\gamma_i$ is contained in $\left<\mathfrak{A}^\times_n,\gamma_{n-1}\right>$ it also follows that $\mathfrak{A}_n=\left<\mathfrak{A}^\times_n,\gamma_{n-1}\right>$ and we may apply Proposition \ref{principal proposition}.
\begin{proposition}  The equality $c(\mathfrak{A}_{2n+1})=c(\mathfrak{A}_{2n-1})+1$ holds for each positive integer $n$.
\end{proposition}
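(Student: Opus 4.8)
The plan is to apply Proposition~\ref{principal proposition} to the monoid $\mathfrak{A}_{2n+1}$, exactly in the spirit of the treatment of $\mathfrak{B}_n$ in Section~\ref{Brauer}. The hypotheses I must verify are three: that $\mathfrak{A}_{2n+1}$ has a non-trivial group of units $G$, that $\mathfrak{A}_{2n+1}=\langle G,e\rangle$ for a suitable idempotent $e\notin G$, and that $\mathfrak{A}_{2n+1}\,e\,\mathfrak{A}_{2n+1}\subseteq\langle E(\mathfrak{A}_{2n+1})\rangle$. Having done so, the conclusion $c(\mathfrak{A}_{2n+1})=c(e\,\mathfrak{A}_{2n+1}\,e)+1$ follows, and it only remains to identify the local monoid $e\,\mathfrak{A}_{2n+1}\,e$ with (a copy of) $\mathfrak{A}_{2n-1}$.

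First I would take $e:=\gamma_{n-1}$, which is a projection (hence idempotent) of rank $(2n+1)-2=2n-1$, so that $e\notin G=\mathfrak{A}^\times_{2n+1}$; here $G$ is the cyclic group generated by $\zeta$ and is certainly non-trivial. The displayed remark immediately preceding the statement records precisely the generation fact I need, namely $\mathfrak{A}_{2n+1}=\langle\mathfrak{A}^\times_{2n+1},\gamma_{n-1}\rangle$, so the hypothesis $S=\langle G,e\rangle$ is in hand. For the containment $\mathfrak{A}_{2n+1}\,\gamma_{n-1}\,\mathfrak{A}_{2n+1}\subseteq\langle E(\mathfrak{A}_{2n+1})\rangle$, I note that the two-sided ideal $S\gamma_{n-1}S$ consists of all elements of rank at most $2n-1$, which is exactly $\Sing\mathfrak{A}_{2n+1}$ since $\gamma_{n-1}$ has corank $2$ and the group of units has full rank $2n+1$; the preceding Proposition establishes $\Sing\mathfrak{A}_{2n+1}=\langle\gamma_1,\dots,\gamma_{2n+1}\rangle$, and each $\gamma_i$ is a projection, hence idempotent. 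Thus $S\gamma_{n-1}S=\Sing\mathfrak{A}_{2n+1}\subseteq\langle E(\mathfrak{A}_{2n+1})\rangle$, as required.

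With all three hypotheses verified, Proposition~\ref{principal proposition} yields $c(\mathfrak{A}_{2n+1})=c(\gamma_{n-1}\mathfrak{A}_{2n+1}\gamma_{n-1})+1$. To finish I would identify the local monoid $\gamma_{n-1}\mathfrak{A}_{2n+1}\gamma_{n-1}$ with $\mathfrak{A}_{2n-1}$. This is the analogue of the isomorphism $\gamma_{12}\mathfrak{B}_n\gamma_{12}\cong\mathfrak{B}_{n-2}$ used in Section~\ref{Brauer} and of the isomorphism $\gamma_{n-1}\mathfrak{EA}_n\gamma_{n-1}\cong\mathfrak{EA}_{n-2}$ used in the even case: conjugating (more precisely, restricting to the local submonoid at) the projection $\gamma_{n-1}$ amounts to freezing the outer string $\{(n-1)',n'\}$ and inner string $\{n-1,n\}$ together and deleting the two endpoints $n-1,n$ and $(n-1)',n'$ from the annulus, leaving an annular diagram semigroup on the remaining $2n-1$ nodes. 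One checks that this restriction is an isomorphism of (regular $*$-)monoids onto $\mathfrak{A}_{2n-1}$, so $c(\gamma_{n-1}\mathfrak{A}_{2n+1}\gamma_{n-1})=c(\mathfrak{A}_{2n-1})$ and the claimed recurrence $c(\mathfrak{A}_{2n+1})=c(\mathfrak{A}_{2n-1})+1$ follows.

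The only genuinely substantive points are the two containment/generation facts, and both are supplied by the results immediately before the statement, so the verification is largely bookkeeping. The step requiring the most care is the identification $\gamma_{n-1}\mathfrak{A}_{2n+1}\gamma_{n-1}\cong\mathfrak{A}_{2n-1}$: one must confirm that sandwiching by $\gamma_{n-1}$ really does return an annular (not merely a Brauer) diagram semigroup on $2n-1$ points, i.e.\ that the planarity-of-strings-in-the-annulus condition is preserved and that oddness of the degree is the only feature that changes. Since the odd/even distinction plays no role in this particular reduction (unlike in Proposition~\ref{singularevenannular}), I expect this identification to go through smoothly by the obvious embedding, mirroring the Brauer case verbatim.
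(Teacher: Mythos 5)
Your proposal is correct and follows essentially the same route as the paper: apply Proposition~\ref{principal proposition} with a corank-$2$ projection $\gamma_i$, using the generation fact $\mathfrak{A}_{2n+1}=\langle\mathfrak{A}^\times_{2n+1},\gamma_i\rangle$ and the idempotent generation of $\Sing\mathfrak{A}_{2n+1}$ to verify its hypotheses, then identify $\gamma_i\mathfrak{A}_{2n+1}\gamma_i\cong\mathfrak{A}_{2n-1}$ (the paper takes $e=\gamma_{2n}$, i.e.\ $\gamma_{n-1}$ with $n$ denoting the degree, rather than your $\gamma_{n-1}$, but this is immaterial since all the $\gamma_i$ are conjugate under powers of the unit $\zeta$).
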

\begin{proof} We apply Proposition \ref{principal proposition} to $S=\mathfrak{A}_{2n+1}$ and $e=\gamma_{2n}$. Since $\mathfrak{A}_{2n-1}\cong \gamma_{2n}\mathfrak{A}_{2n+1}\gamma_{2n}$ we obtain $c(\mathfrak{A}_{2n+1})=c(\mathfrak{A}_{2n-1})+1$.
\end{proof}
Since $c(\mathfrak{A}_1)=0$ we get by induction:
\begin{theorem}\label{annularodd}  The equality $c(\mathfrak{A}_{2n-1})=n-1$ holds for each positive integer $n$.
\end{theorem}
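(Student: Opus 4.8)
The plan is to prove the formula by a straightforward induction on $n$, using the recurrence $c(\mathfrak{A}_{2n+1})=c(\mathfrak{A}_{2n-1})+1$ established in the preceding proposition.

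First I would settle the base case $n=1$. The semigroup $\mathfrak{A}_1$ is trivial, consisting of the identity alone, hence aperiodic with $c(\mathfrak{A}_1)=0$; this agrees with the claimed value $1-1=0$.

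For the inductive step, assuming $c(\mathfrak{A}_{2n-1})=n-1$ for some $n\ge 1$, I would apply the recurrence to obtain
\[
c(\mathfrak{A}_{2(n+1)-1})=c(\mathfrak{A}_{2n+1})=c(\mathfrak{A}_{2n-1})+1=(n-1)+1=n=(n+1)-1,
\]
which is precisely the desired formula with $n$ replaced by $n+1$; this closes the induction.

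There is no genuine obstacle remaining here. All the substantive work has already been carried out in the preceding proposition, where Proposition~\ref{principal proposition} was invoked with the idempotent $e=\gamma_{2n}$ together with the isomorphism $\mathfrak{A}_{2n-1}\cong\gamma_{2n}\mathfrak{A}_{2n+1}\gamma_{2n}$. The sole remaining ingredient is the trivial verification of the base case, after which the closed form follows immediately by induction.
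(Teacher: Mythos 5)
Your proof is correct and follows exactly the same route as the paper: the paper also deduces the theorem by induction from the recurrence $c(\mathfrak{A}_{2n+1})=c(\mathfrak{A}_{2n-1})+1$ of the preceding proposition together with the base case $c(\mathfrak{A}_1)=0$ (the trivial monoid). Your write-up simply makes the induction explicit, which the paper leaves as a one-line remark.
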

\subsection{The partial annular semigroup $P\mathfrak{A}_n$} We are going to treat the partial version $P\mathfrak{A}_n$ of $\mathfrak{A}_n$.  First of all we clearly have
\begin{equation}\label{partialfull}c(P\mathfrak{A}_n)\ge c(\mathfrak{A}_n)\mbox{ for each positive integer $n$.} \end{equation}
The next arguments are analogous to the corresponding ones in the context of the Brauer semigroups.
 Let $P\mathfrak{A}_n^{(n-2)}$ be the ideal of $P\mathfrak{A}_n$ consisting of all elements of rank at most $n-2$. The Rees quotient $P\mathfrak{A}_n/P\mathfrak{A}_n^{(n-2)}$ is an inverse semigroup whence its complexity is $1$. The Ideal Theorem then implies
$$c(P\mathfrak{A}_n)\le c(P\mathfrak{A}_n^{(n-2)})+1.$$
Moreover, since $P\mathfrak{A}_n^{(n-2)}= P\mathfrak{A}_n\gamma_1 P\mathfrak{A}_n$ and $P\mathfrak{A}_{n-2}\cong \gamma_1 P\mathfrak{A}_n\gamma_1$ Lemma \ref{submonoids} implies
\begin{equation}\label{estimationpartial} c(P\mathfrak{A}_n)\le c(P\mathfrak{A}_{n-2})+1\end{equation}
for all $n\ge 2$. Since $c(P\mathfrak{A}_1)=0$, in combination with (\ref{partialfull}) and Theorem \ref{annularodd} this yields:
\begin{theorem} The equality $c(P\mathfrak{A}_{2n-1})=c(\mathfrak{A}_{2n-1})=n-1$ holds for each positive integer $n$.
\end{theorem}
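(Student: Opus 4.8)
The plan is to bracket $c(P\mathfrak{A}_{2n-1})$ between $n-1$ from below and $n-1$ from above, exactly mirroring the treatment of the partial Brauer semigroups in the corollary of Section \ref{Brauer}. Both bounds are already essentially prepared by the preceding discussion, so the proof amounts to assembling them via a short induction along the odd integers.

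For the lower bound I would simply combine the two facts already in hand: inequality (\ref{partialfull}) gives $c(P\mathfrak{A}_{2n-1})\ge c(\mathfrak{A}_{2n-1})$, and Theorem \ref{annularodd} evaluates the right-hand side as $c(\mathfrak{A}_{2n-1})=n-1$. Hence $c(P\mathfrak{A}_{2n-1})\ge n-1$ with no further work. For the upper bound I would argue by induction on $n$ using the recursion (\ref{estimationpartial}), namely $c(P\mathfrak{A}_m)\le c(P\mathfrak{A}_{m-2})+1$. The base case is $c(P\mathfrak{A}_1)=0$, recorded just before the statement. Assuming inductively that $c(P\mathfrak{A}_{2n-3})=n-2$, specialising (\ref{estimationpartial}) to $m=2n-1$ yields $c(P\mathfrak{A}_{2n-1})\le c(P\mathfrak{A}_{2n-3})+1\le (n-2)+1=n-1$.

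Combining the two bounds forces $c(P\mathfrak{A}_{2n-1})=n-1$, and since this coincides with $c(\mathfrak{A}_{2n-1})$ by Theorem \ref{annularodd}, both asserted equalities follow simultaneously. There is no genuine obstacle here: all the real content resides in Theorem \ref{annularodd}, which supplies both the exact odd-degree value and the lower estimate, and in the ideal-theoretic recursion (\ref{estimationpartial}), which caps the increment from $P\mathfrak{A}_{2n-3}$ to $P\mathfrak{A}_{2n-1}$ by one. The only point deserving a moment's care is to seed and step the induction along the odd integers correctly, so that the upper bound exactly matches the lower bound at every stage rather than leaving slack; since the increment is bounded by $1$ and the lower bound rises by $1$ at each step, the two estimates close at every odd degree, giving the claimed equality.
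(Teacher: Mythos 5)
Your proposal is correct and is essentially identical to the paper's own argument: the paper likewise derives the result by combining the recursion (\ref{estimationpartial}) with the base case $c(P\mathfrak{A}_1)=0$ for the upper bound, and inequality (\ref{partialfull}) together with Theorem \ref{annularodd} for the lower bound. Your explicit induction along the odd integers is just a slightly more detailed write-up of the same assembly.
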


The even case is again more difficult. The results obtained so far imply that $n-1\le c(P\mathfrak{A}_{2n})\le n$ for all $n$. The complexity of $P\mathfrak{A}_2$ is of course equal to $1$. The author does not know whether $c(P\mathfrak{A}_4)$ equals $1$ or $2$. The same argument as for $\mathfrak{A}_4$ in order to show that $c(P\mathfrak{A}_4)=1$ cannot be applied since the type II subsemigroup $\mathsf{K}_{\mathbf{G}}(P\mathfrak{A}_4)$ is not aperiodic. In particular, $P\mathfrak{A}_4$ is not contained in $\mathbf{A}*\mathbf{G}$. It is easy to see that $P\mathfrak{A}_4$ is neither contained in $\mathbf{G}*\mathbf{A}$ (not even in $L\mathbf{G}\malc\mathbf{A}$).

That $\mathsf{K}:=\mathsf{K}_\mathbf{G}(P\mathfrak{A}_4)$ is not aperiodic can be seen as follows. The even diagrams \tikz[scale=0.1]{\draw (0,3) -- (3,3); \draw (0,2) -- (3,0); \draw (0,1) .. controls (1,0.5) and (1,0.5) .. (0,0);
\draw (3,2) .. controls (2,1.5) and (2,1.5) .. (3,1);} and \tikz[scale=0.1]{\draw (0,0) -- (3,2); \draw (0,3) -- (3,1); \draw (0,1) .. controls (1,1.5) and (1,1.5) .. (0,2);
\draw (3,0) .. controls (1.5,1.5) and (1.5,1.5) .. (3,3);} belong to $\mathsf{K}$ (since they are in $\Sing \mathfrak{EA_4}$ and so, by Proposition \ref{singularevenannular} in $\left<E(\mathfrak{A}_4)\right>$).  Conjugation of the former element by
\tikz[scale=0.1]{ \draw (0,3) -- (3,3); \draw (0,0) -- (3,0); \filldraw (0,2) circle (2pt); \filldraw (3,1) circle (2pt);\draw (0,1) -- (3,2);} shows that \tikz[scale=0.1]{ \draw (3,3) -- (0,3); \draw (0,1) -- (3,0); \filldraw (0,0) circle (2pt);\filldraw (0,2)circle (2pt); \filldraw (3,1) circle (2pt);\filldraw  (3,2) circle (2pt);} is in $\mathsf{K}$. Since $\mathsf{K}$ is closed under shifts, \tikz[scale=0.1]{ \draw (3,2) -- (0,3); \draw (0,1) -- (3,1); \filldraw (0,0) circle (2pt);\filldraw (0,2)circle (2pt); \filldraw (3,0) circle (2pt);\filldraw  (3,3) circle (2pt);}
is in $\mathsf{K}$. For symmetry reasons, also \tikz[scale=0.1]{ \draw (0,1) -- (3,1); \draw (0,2) -- (3,3); \filldraw (0,0) circle (2pt);\filldraw (0,3)circle (2pt); \filldraw (3,0) circle (2pt);\filldraw  (3,2) circle (2pt);} is in $\mathsf{K}$. The product \tikz[scale=0.1]{ \draw (0,1) -- (3,1); \draw (0,2) -- (3,3); \filldraw (0,0) circle (2pt);\filldraw (0,3)circle (2pt); \filldraw (3,0) circle (2pt);\filldraw  (3,2) circle (2pt);}\tikz[scale=0.1]{ \draw (3,3) -- (0,3); \draw (0,1) -- (3,0); \filldraw (0,0) circle (2pt);\filldraw (0,2)circle (2pt); \filldraw (3,1) circle (2pt);\filldraw  (3,2) circle (2pt);}\tikz[scale=0.1]{ \draw (0,3) -- (3,1); \draw (0,0) -- (3,2); \filldraw (0,1) circle (2pt);\filldraw (0,2)circle (2pt); \filldraw (3,0) circle (2pt);\filldraw  (3,3) circle (2pt); \draw (0,1) .. controls (1,1.5) and (1,1.5) .. (0,2);
\draw (3,0) .. controls (1.5,1.5) and (1.5,1.5) .. (3,3);} is equal to \tikz[scale=0.1]{ \draw (0,1) -- (3,2); \draw (0,2) -- (3,1); \filldraw (0,0) circle (2pt);\filldraw (0,3)circle (2pt); \filldraw (3,0) circle (2pt);\filldraw  (3,3) circle (2pt);\draw (3,0) .. controls (2,1.5) and (2,1.5) .. (3,3);} which is a (= the) non-idempotent member of the group $\mathrsfs{H}$-class of the idempotent  \tikz[scale=0.1]{ \draw (0,1) -- (3,1); \draw (0,2) -- (3,2); \filldraw (0,0) circle (2pt);\filldraw (0,3)circle (2pt); \filldraw (3,0) circle (2pt);\filldraw  (3,3) circle (2pt);\draw (3,0) .. controls (2,1.5) and (2,1.5) .. (3,3);} and is contained in $\mathsf{K}$.

Since $P\mathfrak{A}_4$ has three essential $\mathrsfs{J}$-classes, Tilson's Theorem \cite[Theorem 4.15.2]{qtheory} cannot be applied to compute $c(P\mathfrak{A}_4)$. However, it can be checked that each divisor of $P\mathfrak{A}_4$ which has at most $2$ essential $\mathrsfs{J}$-classes has complexity at most $1$. It should be clear from the discussion in the present section that if $c(P\mathfrak{A}_4)=1$ happened to hold then we immediately would know that $c(P\mathfrak{A}_{2n})=n-1=c(\mathfrak{A}_{2n})$ for all $n\ge 2$, while, if $c(P\mathfrak{A}_4)=2$ were true then the we could not draw any conclusion about the value of $c(P\mathfrak{A}_{2n})$ other than $n-1\le c(P\mathfrak{A}_{2n})\le n$ for all $n\ge 2$ (though it is very likely that in the latter case $c(P\mathfrak{A}_{2n})=n$ holds for all $n\ge 1$).

\end{document}